\newrobustcmd*{\parentexttrack}[1]{%
  \begingroup
  \blx@blxinit
  \blx@setsfcodes
  \blx@bibopenparen#1\blx@bibcloseparen
  \endgroup}
\newtheorem{thm}{Theorem}
\newtheorem{lem}[thm]{Lemma}
\Crefname{lem}{Lemma}{Lemmas}
\theoremstyle{definition}
\title{The optimal packing of eight points in the real projective plane}
\author{Dustin G. Mixon \and Hans Parshall}
\begin{document}

\begin{abstract}

How can we arrange $n$ lines through the origin in three-dimensional Euclidean space in a way that maximizes the minimum interior angle between pairs of lines?  Conway, Hardin and Sloane (1996) produced line packings for $n \leq 55$ that they conjectured to be within numerical precision of optimal in this sense, but until now only the cases $n \leq 7$ have been solved. In this paper, we resolve the case $n = 8$.  Drawing inspiration from recent work on the Tammes problem, we enumerate contact graph candidates for an optimal configuration and eliminate those that violate various combinatorial and geometric necessary conditions.  The contact graph of the putatively optimal numerical packing of Conway, Hardin and Sloane is the only graph that survives, and we recover from this graph an exact expression for the minimum distance of eight optimally packed points in the real projective plane.

\end{abstract}

\maketitle

\section{Introduction}

Consider the fundamental \emph{line packing problem} of packing $n$ points in $\mathbf{RP}^{d-1}$ or $\mathbf{CP}^{d-1}$ so that the minimum distance is maximized.
We refer to such $n$-point sets as \emph{projective $n$-packings}, and we take the distance between two points in projective space to be the interior angle of the corresponding lines in affine space.
The line packing problem has received considerable attention over the last century.
The case of $\mathbf{CP}^1$ is equivalent to packing points in $S^2$, namely, the Tammes problem~\parencite{tammes30}.
The general line packing problem was originally formulated in~\parencite{fejes65} and subsequently studied in~\parencite{welch74,delsarte75,levenshtein82}.
Several putatively optimal packings in $\mathbf{RP}^{d-1}$ were later provided in~\parencite{conway96}, each conjectured to be within numerical precision of an optimal packing.
Most of these conjectures remain open, even despite a surge of progress over the last decade (see~\parencite{fickus18} and references therein) that has been motivated in part by emerging applications in compressed sensing~\parencite{bandeira13}, digital fingerprinting~\parencite{mixon13}, quantum state tomography~\parencite{renes04}, and multiple description coding~\parencite{strohmer03}.

In the present paper, we consider the special case of projective $n$-packings in $\mathbf{RP}^2$, that is, sets of $n$ lines through the origin in $\mathbf{R}^3$.  It is convenient to represent a projective $n$-packing by a set of $n$ unit vectors $\Phi \subseteq S^2$ spanning the corresponding lines, where two such sets represent the same packing if and only if one can be obtained from the other by negating some of the vectors.  We denote the resulting equivalence class by $[\Phi]$.  The \emph{coherence} of the projective $n$-packing $\Phi$ is defined as
\[
	\mu(\Phi) := \max_{\substack{x,y\in\Phi\\x\neq y}} |\langle x,y \rangle|,
\]
which satisfies $\mu(\Phi')=\mu(\Phi)$ for every $\Phi'\in[\Phi]$.  Setting
\[
	\mu_n := \inf\{\mu(\Phi) : \Phi \subseteq S^2, |\Phi| = n\},
\]
we say that a projective $n$-packing $\Phi$ is \emph{optimal} if $\mu(\Phi) = \mu_n$.  The existence of optimal projective $n$-packings is guaranteed for every $n$ by a compactness argument, but they have only been classified for $n \leq 7$~\parencite{fejes65,conway96,cohn12}; see~\cref{OptPackings} for an illustration.  In this paper, we compute $\mu_8$ and identify an optimal projective $8$-packing, which is unique up to isometry.

\begin{figure}[t]
\begin{center}
\includegraphics[width=0.32\textwidth]{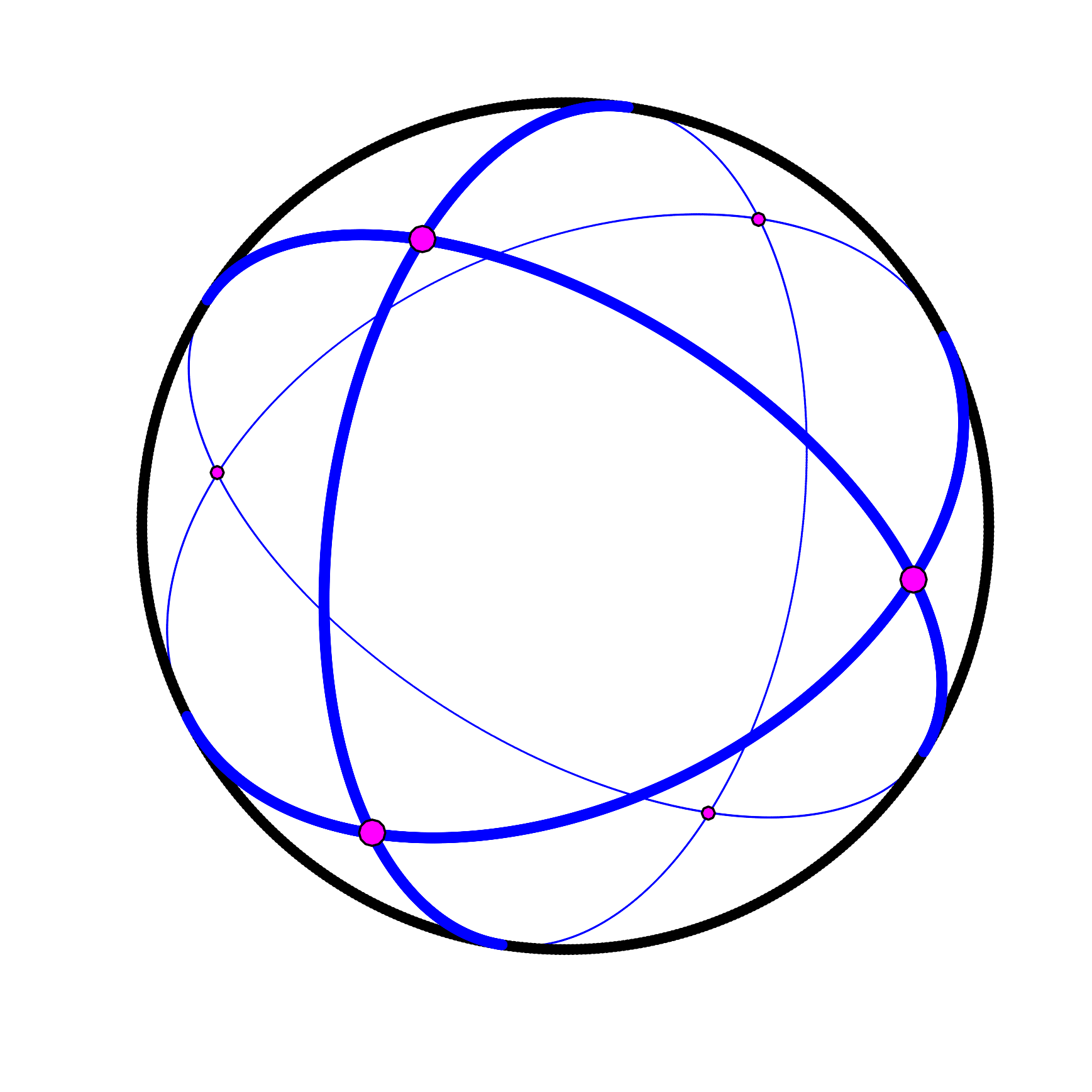}
\includegraphics[width=0.32\textwidth]{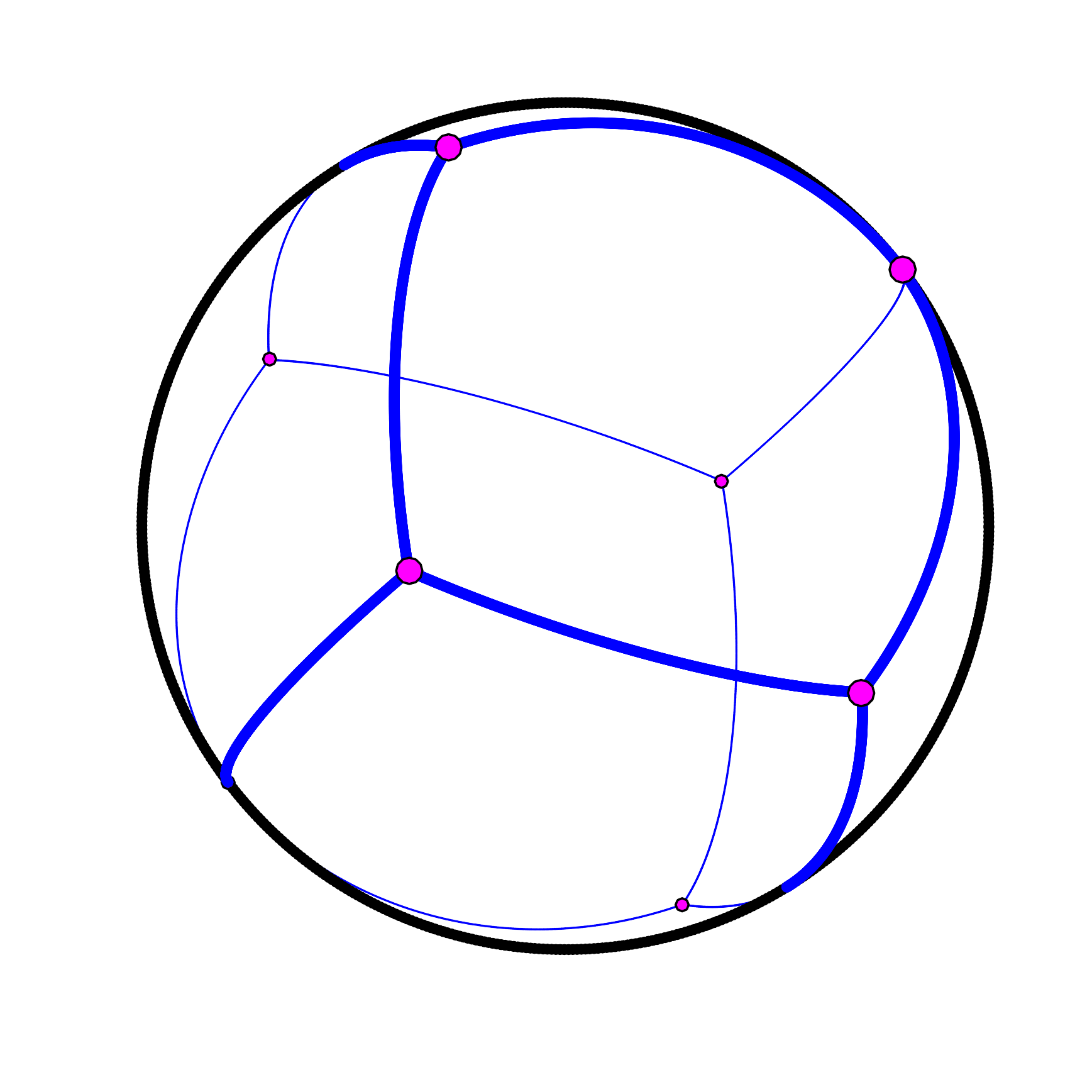}
\includegraphics[width=0.32\textwidth]{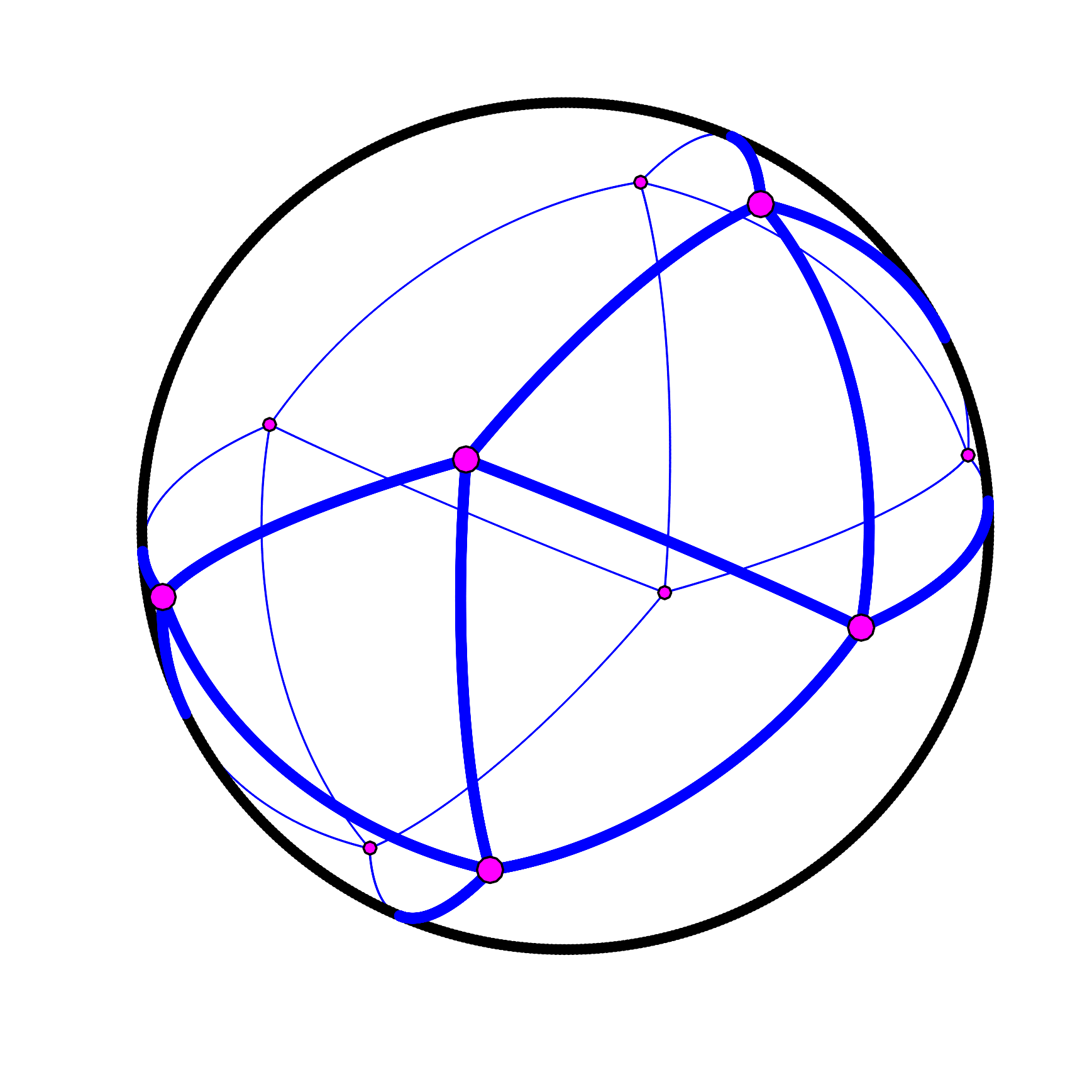}
\includegraphics[width=0.32\textwidth]{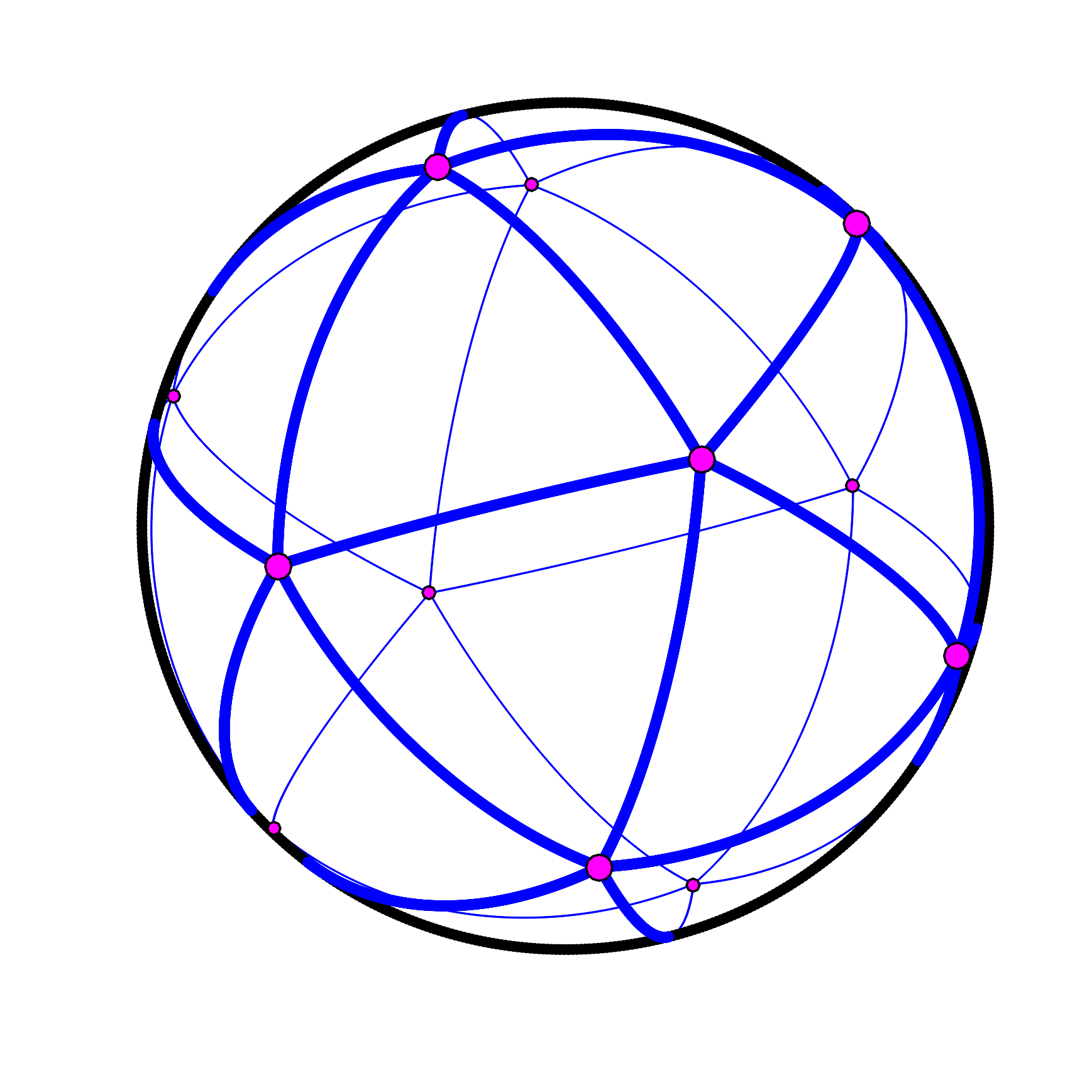}
\includegraphics[width=0.32\textwidth]{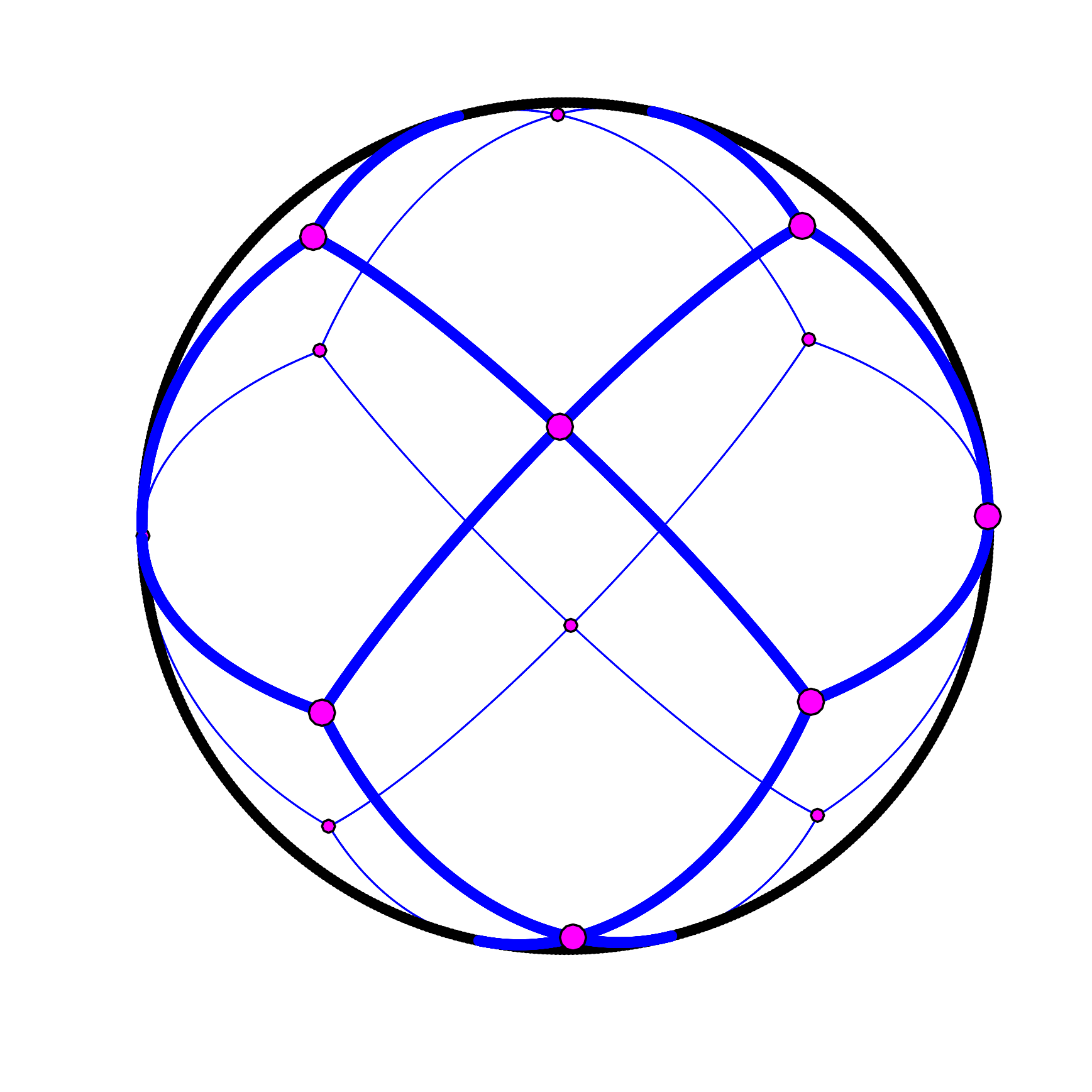}
\includegraphics[width=0.32\textwidth]{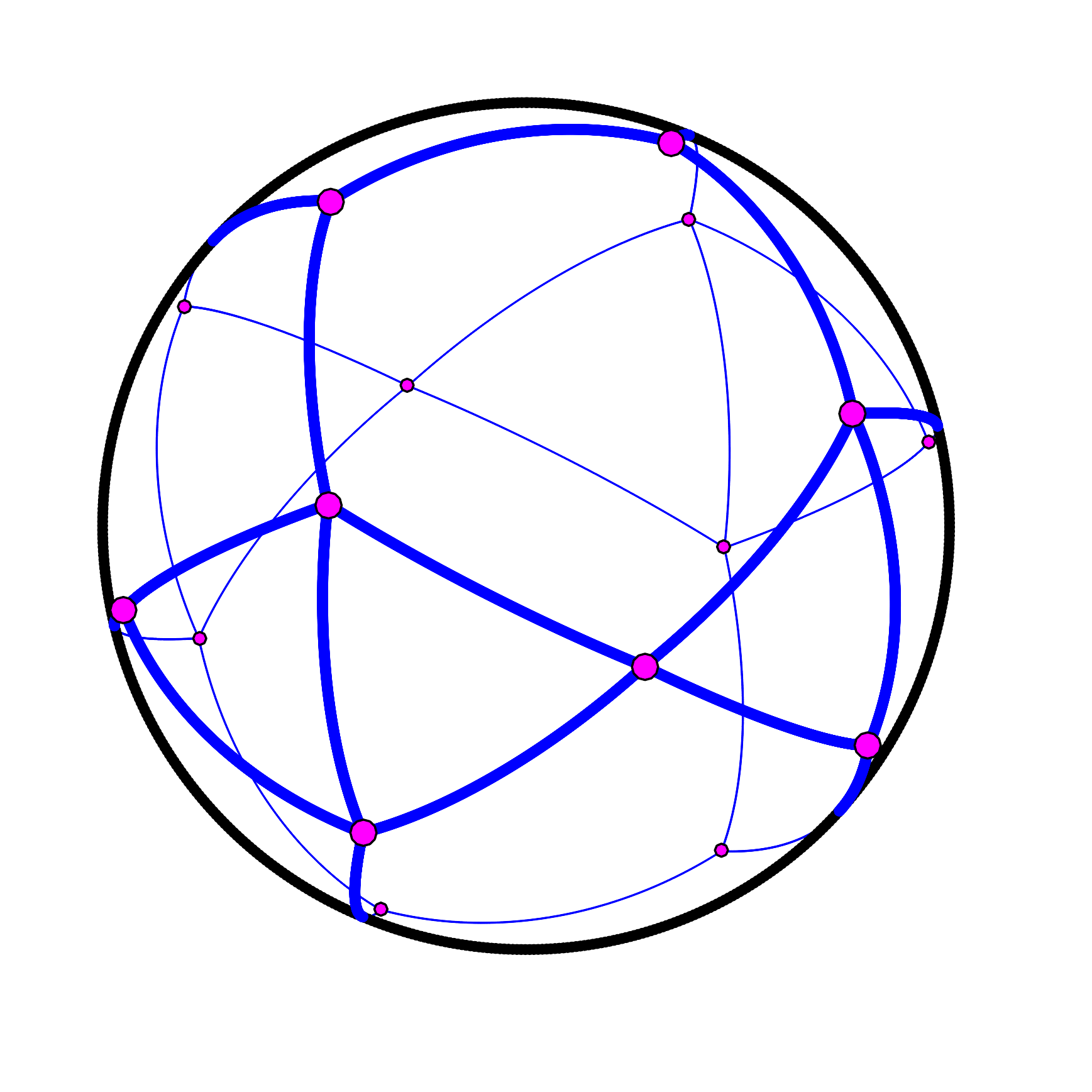}\end{center}
\caption{
Optimal antipodal packings of $2n$ points in $S^2$ for $n\in\{3,\ldots,8\}$, along with the geodesic edges that appear in the corresponding contact graphs.
For the packings with $n\leq 6$, unique optimality was established in~\parencite{fejes65}.
Optimality in the case $n=7$ was proven in~\parencite{conway96}, while the uniqueness of this optimal packing was established in~\parencite{cohn12}.
Our main contribution is a computer-assisted proof of a conjecture in~\parencite{conway96} that the above packing for $n=8$ is optimal (and uniquely so).
\label{OptPackings}}
\end{figure}

\begin{thm}
The projective $8$-packing with minimum coherence $\mu_8$ is unique up to isometry.  Furthermore, $\mu_8$ is the third-largest root of
\[
	1 + 5x - 8x^2 - 80x^3 - 78x^4 + 146x^5 - 80x^6 - 584x^7 + 677x^8 + 1537x^9,
\]
which is given numerically by $\mu_8 \approx 0.6475889787$.
\end{thm}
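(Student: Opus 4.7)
The plan is to translate the coherence optimization into a combinatorial-geometric search over \emph{contact graphs}, mirroring the approach developed for the Tammes problem. Given an optimal projective $8$-packing $\Phi$, lift to the antipodal set $\Phi \cup (-\Phi) \subseteq S^2$ of $16$ points and define its contact graph $G(\Phi)$ on $16$ vertices by placing an edge between $x$ and $y$ whenever $|\langle x, y\rangle| = \mu(\Phi)$; geometrically, edges correspond to pairs of points realizing the minimum spherical distance $\arccos \mu(\Phi)$ on $S^2$. Optimality forces $G(\Phi)$ to be rigid in a precise sense: if $G(\Phi)$ has a low-degree vertex or admits an infinitesimal motion that increases all contact distances, one can perturb $\Phi$ to strictly decrease $\mu(\Phi)$, contradicting optimality. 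This yields a list of necessary conditions on $G(\Phi)$, including antipodal symmetry, a minimum-degree bound (at least $3$ for each vertex), planarity as a spherical embedding, and face-size restrictions coming from the Conway--Hardin--Sloane packing, which supplies a working upper bound on $\mu_8$.

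The first step is to enumerate, up to antipodal graph isomorphism, every graph on $16$ vertices satisfying these necessary conditions. A branch-and-bound search, pruned aggressively by spherical area inequalities (a spherical face bounded by $k$ geodesic arcs of length $\arccos \mu$ has a computable minimum area) and the global constraint that all face areas sum to $4\pi$, yields a finite list of candidate contact graphs.

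The second step is to rule out every candidate other than the graph $G^{\star}$ realized by the Conway--Hardin--Sloane configuration. For each candidate $G$, the requirement that every edge of $G$ correspond to an inner product of exactly $\pm \mu$ produces a polynomial system in the coordinates of $\Phi$ together with $\mu$, and the non-edge conditions $|\langle x, y\rangle| \leq \mu$ supply semi-algebraic inequalities. Exact symbolic elimination (Gröbner bases or resultants) on this system decides real feasibility: most candidates will admit no real solution, and the remainder will produce solutions with associated $\mu$ strictly larger than the Conway--Hardin--Sloane value and thus can be discarded. Once only $G^{\star}$ survives, applying the same elimination to its system yields a univariate polynomial in $\mu$ from which the stated degree-$9$ polynomial and its third-largest root fall out, and uniqueness up to isometry follows from uniqueness of the geometric realization of $G^{\star}$.

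The main obstacle I expect is twofold. Combinatorially, the enumeration of antipodally symmetric planar graphs on $16$ vertices with the imposed degree and face constraints can easily explode, so the pruning inequalities must be sharpened to keep the candidate list small enough to process. Algebraically, certifying infeasibility of a candidate system, or bounding its roots away from the Conway--Hardin--Sloane coherence, cannot be done with floating-point heuristics; it requires exact symbolic computation on systems that sit near the practical edge of current computer algebra, so careful variable orderings and exploitation of each graph's symmetry group will be essential.
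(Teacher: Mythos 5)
Your high-level strategy---lift to an antipodal $16$-point set, enumerate contact graph candidates, and eliminate all but the Conway--Hardin--Sloane graph---is the same as the paper's, but there is a genuine gap in your reduction to candidate graphs. You assert that optimality forces every vertex of $G(\Phi)$ to have degree at least $3$, on the grounds that a low-degree vertex could be perturbed to ``strictly decrease $\mu(\Phi)$.'' That inference is false: moving a single point away from its contacts does not decrease the coherence, because $\mu(\Phi)$ is a maximum over all pairs and the other pairs still realize it. The correct statement is only that one may pass, within each maximal connected set of optimal packings, to a representative whose contact graph is \emph{irreducible}, and in an irreducible contact graph the \emph{non-isolated} vertices have degree $3$ to $5$ while isolated vertices (rattlers) may persist, subject only to restrictions on which faces can contain them (length at least $6$, etc.). This is not a pedantic point here: one of the four candidates surviving the paper's geometric elimination ($H_3$ in \cref{fig.sloane graphs}) has an isolated antipodal pair, i.e.\ its essential part has only $14$ vertices. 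An enumeration restricted to $16$-vertex graphs of minimum degree $3$ never generates such candidates, so your case analysis would be incomplete and the proof would not close. Relatedly, the elimination then only shows that each connected component of the optimal set has a representative with contact graph $G^{\star}$; to conclude uniqueness you also need that this component is a singleton, which follows from $G^{\star}$ being irreducible with no isolated vertices---a step your sketch omits.

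The second serious concern is the feasibility of your elimination step. Running exact Gr\"obner-basis or resultant elimination on the coordinate system of each candidate (roughly $24$ coordinates plus $\mu$, with semi-algebraic non-edge inequalities) for hundreds of graphs is far beyond what is practical; the paper's companion work needed cylindrical algebraic decomposition for a \emph{single} Gram-matrix system with $14$ unknowns, and only after the contact graph was pinned down. The paper instead parametrizes each candidate by the interior angles of its faces, relaxes the spherical-law-of-cosines relations to linear constraints via interval arithmetic, and eliminates candidates by iterated linear programming and interval subdivision---cheap certified computations. Even this is not enough for the three candidates that are subgraphs of $G^{\star}$ (deleting a contact corresponds to an infinitesimal perturbation, so no coarse metric bound separates them); these require the Karush--Kuhn--Tucker equilibrium-stress condition of \cref{stressLemma}, an ingredient absent from your plan. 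You flag the computational explosion as a risk, but without the quotient-graph/double-cover enumeration, the LP relaxation machinery, and the stress-matrix argument, the proposal as written does not have a viable path to termination.
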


This improves upon the prior state of the art, which is summarized by the following lemma.
The lower bound follows from the Levenshtein bound~\parencite{levenshtein82}, while the upper bound follows from an explicit numerical packing given in~\parencite{conway96}, which was recently given exact coordinates in~\parencite{mixon19}.

\begin{lem}\label{mu8Lemma}
The coherence $\mu_8$ of an optimal projective 8-packing satisfies 
\[
0.6 \leq \mu_8 \leq 0.647588979.
\]
\end{lem}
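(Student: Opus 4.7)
The plan is to establish the two bounds separately, since both are already accessible from results in the literature.

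For the upper bound, I would take the explicit projective $8$-packing numerically identified in \parencite{conway96}, whose exact algebraic coordinates were worked out in \parencite{mixon19}. Computing the $\binom{8}{2}=28$ pairwise absolute inner products of these unit vectors and taking the maximum yields the coherence of the packing; it then suffices to check that this maximum is at most $0.647588979$. Since the coordinates lie in a fixed number field, the comparison can be carried out rigorously either by evaluating the relevant algebraic number to sufficient precision or by producing its minimal polynomial (anticipating the sharper description of $\mu_8$ given in our main theorem).

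For the lower bound, I would invoke the Levenshtein bound~\parencite{levenshtein82}, specialized to antipodal spherical codes on $S^2$, of which projective $8$-packings are instances. Concretely, for $n=8$ one identifies the Levenshtein interval into which the optimal inner product must fall, assembles the associated auxiliary polynomial from a suitable combination of even-degree Gegenbauer polynomials on $S^2$, and solves for the smallest admissible value; one then verifies numerically that this value is at least $0.6$.

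I do not expect either half to pose a substantive obstacle, as both are essentially computational and already documented in the literature. The more delicate piece is the lower bound, where one must correctly match the Levenshtein framework to the antipodal setting and pin down the correct interval for $n=8$; the upper bound reduces to a direct inner-product computation on a fixed list of eight algebraically specified unit vectors.
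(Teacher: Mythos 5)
Your proposal matches the paper's argument exactly: the upper bound is obtained from the explicit Conway--Hardin--Sloane packing (with exact coordinates from \parencite{mixon19}), and the lower bound is the Levenshtein bound from \parencite{levenshtein82}. The paper offers no further detail beyond these two citations, so your elaboration of the computations is, if anything, more explicit than the original.
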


A set $X \subseteq S^2$ is said to be \emph{antipodal} if $X = -X$, and we say $X$ is an \emph{antipodal $2n$-packing} if $X$ is antipodal with $|X|=2n$.
We may identify any projective $n$-packing $\Phi$ with an antipodal $2n$-packing, namely
\[
	X(\Phi) := \{x \in S^2 : x \in \Phi \text{ or } {-x} \in \Phi\}.
\]
For $x,y \in S^2$, we define their \emph{geodesic distance} to be
\[
	d(x,y) := \arccos(\langle x,y \rangle),
\]
and we define the \emph{minimum distance} of any finite $X \subseteq S^2$ to be
\[
	\psi(X) := \min_{\substack{x,y \in X \\ x \neq y}} d(x,y).
\]
Of course, minimizing $\mu(\Phi)$ over projective $n$-packings $\Phi$ is equivalent to maximizing $\psi(X)$ over antipodal $2n$-packings $X$.  Both formulations play a role in our proof of the main result.

The second formulation bears resemblance to the \emph{Tammes problem}, originating in \parencite{tammes30}, of maximizing $\psi(X)$ over arbitrary packings of $n$ points $X \subseteq S^2$.  The earliest solutions to the Tammes problem, for $n = 3, 4, 6, 12$, are given by regular triangulations of the sphere that achieve equality in a general inequality on the optimal minimum distance~\parencite{fejes49}.  For sufficiently large values of $n$, this inequality was sharpened in~\parencite{robinson61}, leading to the solution for $n = 24$.  Further progress has been made by studying the \emph{contact graph} of $X \subseteq S^2$, denoted $G(X)$, which is drawn on the sphere with vertices $X$ and with a geodesic edge between $x,y \in X$ exactly when $d(x,y) = \psi(X)$.  Classifying contact graphs led to solutions to the Tammes problem for $n = 5, 7, 8, 9$~\parencite{schutte51} and $n = 10, 11$~\parencite{danzer86}.  More recently, computer enumeration and optimization of over a billion contact graph candidates led to the solutions for $n = 13, 14$~\parencite{musin12,musin15}.  The authors write in related work~\parencite{musinSteklov15} that ``the direct approach to the solution of the Tammes problem based on computer enumeration of irreducible contact graphs \ldots\ has actually been exhausted.''  All other cases of the Tammes problem remain open.  For instance, the $n=20$ case is open, though it has long been known that the vertices of the dodecahedron are suboptimal~\parencite{vdw52}.

To prove our main result, we adapt the approach of contact graph enumeration and elimination.  To make the enumeration feasible, we leverage the antipodal constraint to significantly reduce the number of graphs that we need to generate and consider.  
To this end, Section~\ref{contactSection} identifies a set $\Gamma_1$ of $547$ graphs with the property that every maximal connected set of optimal antipodal $16$-packings has a representative packing with contact graph in $\Gamma_1$.
In Section~\ref{sec.combinatorial}, we consider all planar embeddings of members of $\Gamma_1$ modulo homeomorphism and reflection.
These equivalence classes can be represented as combinatorial embeddings, which we identify using the SPQR-tree data structure developed in~\parencite{dibattista89}.
After removing embeddings that violate various necessary conditions on the contact graph's facial structure, we arrive at a set $\mathcal{E}_1$ of $217$ combinatorial embeddings of contact graph candidates.
Section~\ref{geometricSection} then imposes geometric constraints in order to eliminate all but four of these candidates, three of which are subgraphs of the fourth.
We then demonstrate in Section~\ref{sec.opt and uniqueness} that the three subgraphs violate the Karush--Kuhn--Tucker optimality conditions, thereby isolating a single contact graph candidate $G$. For the sake of reproducibility, we offer a SageMath worksheet~\parencite{code} for our computations.
By~\parencite[Theorem II.2]{mixon19}, there is a unique antipodal $2n$-packing (up to isometry) with coherence satisfying Lemma~\ref{mu8Lemma} and whose contact graph is isomorphic to $G$.
The singleton set containing this packing is a maximal connected set of optimal packings, thereby implying uniqueness.
We conclude in Section~\ref{sec.future work} with a discussion of possible directions for future work.

\section{Enumerating Contact Graph Candidates}\label{contactSection}

In this section, we discuss various combinatorial features of the contact graph of an antipodal $2n$-packing, and then we apply these features to enumerate contact graph candidates for the case where $n=8$.
First, for any $X\subseteq S^2$, notice that no two edges of $G(X)$ may cross, since otherwise, a simple application of the triangle inequality produces two points in $X$ that are closer than $\psi(X)$ apart.  This implies the following.

\begin{lem}
	For any $X \subseteq S^2$, $G(X)$ is a planar graph.
\end{lem}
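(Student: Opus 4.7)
The plan is to exhibit a crossing-free drawing of $G(X)$ on $S^2$ itself, from which planarity will follow (for instance by stereographic projection from any point of $S^2$ avoiding the finitely many vertices and edges). I would place each vertex at its corresponding point of $X$ and draw each edge $\{x,y\}$ as the minimizing geodesic arc from $x$ to $y$; this is unambiguous because $\psi(X)<\pi$ forces adjacent points to be non-antipodal and hence joined by a unique minimizing geodesic. It then suffices to verify that no two edges of this drawing meet at an interior point.

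The core step is a proof by contradiction: suppose distinct edges $\{a,b\}$ and $\{c,d\}$ cross at an interior point $p$. I would first rule out shared endpoints: if, say, $a=c$, then $p$ lies on two minimizing geodesics of length $\psi(X)<\pi$ issuing from $a$, and uniqueness of the initial direction (since $d(a,p)<\pi$) forces these geodesics to coincide, whence $b=d$. Having established that $a,b,c,d$ are distinct, the identities $d(a,p)+d(p,b)=d(c,p)+d(p,d)=\psi(X)$ allow me to relabel within each edge so that $d(a,p),d(c,p)\le\psi(X)/2$. The triangle inequality then gives $d(a,c)\le d(a,p)+d(p,c)\le\psi(X)$, and strict inequality anywhere in this chain contradicts the minimality of $\psi(X)$.

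The one small obstacle is upgrading to strict inequality. Equality in the spherical triangle inequality forces $p$ onto the minimizing geodesic $[a,c]$, so together with $p\in[a,b]\cap[c,d]$ all four points would lie on a common great circle; but in that coplanar case the two ``crossing'' arcs actually overlap on a subarc, and inspecting the cyclic order of $a,b,c,d$ along the circle directly exhibits a pair at distance strictly less than $\psi(X)$. Otherwise, the two great circles carrying the arcs are distinct and meet transversally at $p$, so the spherical law of cosines yields the strict bound $d(a,c)<d(a,p)+d(p,c)$ immediately. Either way we contradict the definition of $\psi(X)$, so the spherical drawing has no crossings and $G(X)$ is planar.
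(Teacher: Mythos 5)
Your argument is exactly the paper's: the paper disposes of this lemma in one sentence by noting that a crossing of two geodesic edges would, via the triangle inequality, produce two points of $X$ at distance less than $\psi(X)$. You have simply supplied the details the paper leaves implicit (the relabelling so that $d(a,p),d(c,p)\leq\psi(X)/2$, and the degenerate coplanar case needed to upgrade to strict inequality), and these details are correct.
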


We define the \emph{projective contact graph} $G([\Phi])$ of a projective $n$-packing $\Phi$ to have vertex set $\Phi$ and $x\leftrightarrow y$ precisely when $|\langle x,y\rangle| = \mu(\Phi)$.  The contact graph $G(X(\Phi))$ of the corresponding antipodal packing provides a canonical double cover of $G([\Phi])$, where the covering map amounts to identifying antipodal pairs.  This covering map naturally provides a \emph{projective planar} embedding of $G([\Phi])$.

\begin{lem}\label{projLemma}
	For any projective packing $\Phi$, $G([\Phi])$ is a projective planar graph.
\end{lem}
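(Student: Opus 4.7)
The plan is to transfer the planar embedding of $G(X(\Phi))$ in $S^2$ guaranteed by the previous lemma down to a projective planar embedding of $G([\Phi])$ via the antipodal quotient map $\pi\colon S^2 \to \mathbf{RP}^2$. Since the edges in question are geodesic arcs of length $\psi(X(\Phi))=\arccos\mu(\Phi)<\pi$, no edge contains a pair of antipodal points, so $\pi$ will restrict to a homeomorphism on each edge of $G(X(\Phi))$, and the quotient will be a well-defined cell-complex embedding in $\mathbf{RP}^2$.

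First, I would realize $G(X(\Phi))$ concretely as a subset of $S^2$ by drawing each edge $\{x,y\}$ as the unique minimizing geodesic from $x$ to $y$; this is permissible precisely because $d(x,y)=\psi(X(\Phi))<\pi$. The previous lemma tells us these arcs have pairwise disjoint interiors. Next, I would verify that the antipodal involution $\sigma\colon x\mapsto -x$ preserves this embedded graph: it sends $X(\Phi)$ to itself since $X(\Phi)$ is antipodal, and since $d(-x,-y)=d(x,y)$, it sends edges of $G(X(\Phi))$ to edges of $G(X(\Phi))$; moreover, $\sigma$ sends the geodesic arc from $x$ to $y$ to the geodesic arc from $-x$ to $-y$. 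Thus $\sigma$ acts freely by cellular homeomorphisms on the embedded graph.

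Passing to the quotient $\pi\colon S^2\to\mathbf{RP}^2$, each pair $\{x,-x\}\subseteq X(\Phi)$ collapses to a single vertex of $[\Phi]$, and each $\sigma$-orbit $\{\{x,y\},\{-x,-y\}\}$ of edges of $G(X(\Phi))$ collapses to a single geodesic arc in $\mathbf{RP}^2$ joining $[x]$ to $[y]$. I would then identify the resulting vertex-and-arc system in $\mathbf{RP}^2$ with $G([\Phi])$: the defining condition $|\langle x,y\rangle|=\mu(\Phi)$ for an edge of $G([\Phi])$ is equivalent to $d(x,y)=\psi(X(\Phi))$ or $d(-x,y)=\psi(X(\Phi))$, and these are exactly the pairs of antipodal lifts whose images under $\pi$ we just described.

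The last thing to check — and the only place where care is required — is that distinct edges of the quotient have disjoint interiors in $\mathbf{RP}^2$. Suppose two such arcs met. Each pulls back to a $\sigma$-orbit of two edges of $G(X(\Phi))$, so by picking appropriate lifts one would obtain two distinct edges of $G(X(\Phi))$ meeting in $S^2$, contradicting the previous lemma. I expect this non-crossing step to be the main (though still routine) obstacle, because one must be careful that the two intersecting arcs in $\mathbf{RP}^2$ can always be lifted simultaneously in a way that produces a genuine crossing upstairs — which follows from the fact that $\pi$ is a local homeomorphism and each arc, being shorter than $\pi$, lies inside an evenly covered neighborhood of any of its points.
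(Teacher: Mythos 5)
Your proposal is correct and is essentially the paper's own argument: the paper proves this lemma only by the remark preceding it, namely that $G(X(\Phi))$ is a canonical double cover of $G([\Phi])$ under antipodal identification, so the spherical embedding descends through the quotient $S^2\to\mathbf{RP}^2$ to a projective planar embedding. You have simply filled in the routine details (edges shorter than $\pi$ contain no antipodal pairs, the antipodal map preserves the embedded graph, and crossings downstairs would lift to crossings upstairs) that the paper leaves implicit.
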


For finite $X \subseteq S^2$, we say $x \in X$ can be \emph{shifted} if every open neighborhood $U \subseteq S^2$ of $x$ contains a point $y \in U$ such that $d(x',y) > \psi(X)$ for all $x' \in X \setminus \{x\}$.  If the isolated vertices of $G(X)$ are the only ones that can be shifted, then we call $G(X)$ \emph{irreducible}.  Irreducible contact graphs were introduced in~\parencite{schutte51} in order to study the Tammes problem.  They made the following straightforward observations.

\begin{lem}\label{svLemma} Suppose $X \subseteq S^2$ with $|X| > 6$ and $G(X)$ irreducible.
	\begin{enumerate}
		\item[(i)] If $x \in X$ is not isolated in $G(X)$, then $3 \leq \deg(x) \leq 5$.
		\item[(ii)] Each face of $G(X)$ is a convex spherical polygon.
		\item[(iii)] The length of each face of $G(X)$ is at most $\lfloor 2\pi / \psi(X) \rfloor$.
	\end{enumerate}
\end{lem}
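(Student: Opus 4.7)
My plan is to handle parts (i), (ii), and (iii) in turn. For the upper bound in (i), the argument is purely spherical-trigonometric: if $y_1, \ldots, y_d$ are the neighbors of $x$ arranged cyclically on the circle of spherical radius $\psi := \psi(X)$ about $x$, then applying the spherical law of cosines to each isosceles triangle $xy_iy_{i+1}$ with $d(y_i,y_{i+1}) \geq \psi$ yields $\cos\theta_i \leq \cos\psi/(1+\cos\psi)$, where $\theta_i$ is the angle at $x$. Since $\psi > 0$ this forces $\theta_i > \pi/3$, and $\sum_i \theta_i = 2\pi$ then gives $d < 6$. For the lower bound I argue the contrapositive: if $x$ is non-isolated with $\deg(x)\in\{1,2\}$, then $x$ is shiftable. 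The case $\deg(x)=1$ is immediate by perturbing $x$ in a tangent direction opposite to its sole neighbor. For $\deg(x)=2$ with neighbors $y_1, y_2$, I first rule out the antipodal configuration $y_1=-y_2$: this forces $\psi=\pi/2$, but the classical bound of at most six pairwise non-acute unit vectors in $\mathbf{R}^3$ would then give $|X|\leq 6$, contradicting $|X|>6$. In the remaining case, the tangent directions at $x$ toward $y_1$ and $y_2$ are contained in an open half-plane, so a unit direction $w$ exists in the opposite open half-plane; perturbing $x$ along $w$ strictly increases $d(x,y_1)$ and $d(x,y_2)$ to first order, while continuity preserves $d(x,z)>\psi$ for every non-neighbor $z$, and $x$ is shifted.

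For (ii), fix a face $F$ of $G(X)$ and a vertex $v\in\partial F$. The edges of $G(X)$ incident to $v$ all lie in the complement of the face-sector at $v$ in the tangent plane. If the face-angle at $v$ were at least $\pi$, then every neighbor-direction at $v$ would lie in a closed half-plane, so pushing $v$ into $F$ along the bisector $w$ of the face-sector yields a tangent direction with nonpositive inner product against every neighbor-direction. First-order analysis then strictly increases $d(v,y)$ for every neighbor $y$ whose direction makes a strictly obtuse angle with $w$; any neighbor-directions orthogonal to $w$ (which occur only when the face-angle equals $\pi$ exactly) lie along a geodesic to which $w$ is transverse, so the second-order increase of geodesic distance under transverse motion finishes the job. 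Hence $v$ is shiftable, contradicting irreducibility, so every face-angle is strictly less than $\pi$ and every face is a convex spherical polygon.

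Finally for (iii), each face is by (ii) a convex spherical polygon whose $k$ sides are geodesic arcs of length $\psi$, so its perimeter equals $k\psi$. A convex region on $S^2$ is contained in a closed hemisphere and its boundary has length at most the great-circle length $2\pi$, so $k\psi\leq 2\pi$ and thus $k\leq \lfloor 2\pi/\psi\rfloor$. I expect the main obstacle to be the careful execution of the shiftability arguments in (i) and (ii), particularly the borderline cases (antipodal degree-$2$ neighbors and face-angles exactly equal to $\pi$) in which first-order analysis degenerates and one must invoke a second-order expansion of geodesic distance; the hypothesis $|X|>6$ is exactly what excludes the problematic configuration in (i).
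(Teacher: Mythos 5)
Your overall strategy coincides with the paper's: the degree bounds come from shiftability (lower bound) and the angular pigeonhole at a vertex via the spherical law of cosines (upper bound), convexity of faces comes from shifting a reflex vertex into its face, and the face-length bound comes from the fact that a convex spherical polygon has perimeter at most $2\pi$. (The paper derives that last fact from its \cref{lem.spherical polygons} by taking $B$ to be a hemisphere containing the face; you simply assert it, which is acceptable since it is the same standard fact.) The paper's own proof is considerably terser than yours --- it states the shiftability claims for $\deg(x)\in\{1,2\}$ without detail --- so most of your added care is welcome.

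There is, however, one concrete false step in your degree-$2$ argument. You claim that once the antipodal configuration $y_1=-y_2$ is excluded, ``the tangent directions at $x$ toward $y_1$ and $y_2$ are contained in an open half-plane.'' This is not true: the two tangent directions at $x$ are exactly opposite whenever $y_1$, $x$, $y_2$ lie on a common great circle with $x$ between $y_1$ and $y_2$, and this can happen with $y_1\neq -y_2$ (then $d(y_1,y_2)=2\psi>\psi$, so nothing in the hypotheses forbids it). In that configuration no direction $w$ has strictly negative inner product with both tangent vectors, and your first-order argument gives nothing. The repair is exactly the second-order tool you already deploy in part (ii): move $x$ perpendicular to the common great circle; by the spherical Pythagorean identity $\cos d(x(t),y_i)=\cos(t)\cos(\psi)$, both distances strictly increase for $t\neq 0$ \emph{provided} $\psi<\pi/2$, which is where the hypothesis $|X|>6$ enters (the same place the paper invokes it). So your closing remark misidentifies the degenerate case: $|X|>6$ does rule out $y_1=-y_2$, but the genuinely borderline configuration is collinear non-antipodal neighbors, and it is handled by the second-order expansion rather than excluded. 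With that case added (and with the observation that the analogous $\psi<\pi/2$ hypothesis is also what justifies the second-order step in your part (ii) when a face angle equals $\pi$ exactly), the argument is complete and matches the paper's.
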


The proof of Lemma~\ref{svLemma}(iii) hinges on the following useful lemma.

\begin{lem}\label{lem.spherical polygons}
For every convex spherical $B\subseteq S^2$ and convex spherical polygon $A\subseteq B$, the perimeter of $A$ is at most the perimeter of $B$.
\end{lem}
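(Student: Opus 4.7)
My plan is to reduce $B$ down to $A$ by a finite sequence of half-space cuts, showing that each cut weakly decreases the perimeter. Label the edges of $A$ as $e_1, \ldots, e_k$, and for each $i$ let $\ell_i$ denote the great circle extending $e_i$ and $H_i$ the closed hemisphere bounded by $\ell_i$ that contains $A$. Because $A$ is a convex spherical polygon with these edges and $A \subseteq B$, we have $A = B \cap H_1 \cap \cdots \cap H_k$.

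Set $B_0 := B$ and $B_i := B_{i-1} \cap H_i$ for $1 \leq i \leq k$. Each $B_i$ is a convex spherical region containing $A$, and $B_k = A$. The key estimate is
\[
	\text{perim}(B_i) \leq \text{perim}(B_{i-1}).
\]
If $\ell_i \cap B_{i-1}$ is empty then $B_i = B_{i-1}$ and there is nothing to prove. Otherwise $\ell_i$ meets $B_{i-1}$ in a single geodesic chord $c_i$ whose endpoints lie on $\partial B_{i-1}$. This chord splits $\partial B_{i-1}$ into an arc $\alpha_i$ inside $H_i$ and an arc $\beta_i$ outside $H_i$, and $\partial B_i = \alpha_i \cup c_i$. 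The perimeter comparison therefore reduces to showing $\text{length}(c_i) \leq \text{length}(\beta_i)$; since $c_i$ is a minor geodesic arc joining the two endpoints of $\beta_i$ and geodesics minimize length on the sphere, this is immediate.

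Telescoping yields $\text{perim}(A) = \text{perim}(B_k) \leq \cdots \leq \text{perim}(B_0) = \text{perim}(B)$. The main routine point to verify is that $\ell_i \cap B_{i-1}$ is a single geodesic segment rather than a pair of antipodal arcs, and that this chord really is the minor arc between its endpoints; both follow from the standing assumption that a convex spherical region is contained in a closed hemisphere (which also ensures that intersecting the $H_i$ with $B$ recovers exactly $A$, not an additional antipodal copy). Beyond this hemisphere-containment bookkeeping, the argument is a direct spherical analogue of the classical planar fact that one convex body nested inside another has smaller perimeter, with the shortest-path property of geodesics standing in for the triangle inequality in the plane.
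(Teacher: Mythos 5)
Your proof is correct and follows essentially the same route as the paper's: both write $A$ as an intersection of the hemispheres determined by its edges, cut $B$ down one hemisphere at a time, and observe that each cut replaces a boundary arc by a shorter geodesic chord (the paper compresses this last step into ``by the triangle inequality''). Your version simply fills in the details the paper leaves implicit, including the hemisphere-containment bookkeeping that makes the chord a minimizing geodesic.
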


\begin{proof}
There exist finitely many hemispheres $\{H_i\}$ such that $A=\bigcap_i H_i$.
Put $S_0:=B$ and $S_{k+1}:=S_k\cap H_{k+1}$ for each $k\geq 0$.
Then by the triangle inequality, the perimeter of $S_{k+1}$ is at most that of $S_k$ for every $k$, implying the result.
\end{proof}

\begin{proof}[Proof of Lemma~\ref{svLemma}]
If $\deg(x) = 1$, then $x$ can be shifted.  Similarly, if $\deg(x) = 2$ and $\psi(X) < \pi/2$, which holds for $|X| > 6$, then $x$ can be shifted.
If $\deg(x) > 5$, then the smallest angle between two edges emanating from $x$ is at most $\pi/3$.  This forces the corresponding vertices adjacent to $x$ to lie closer to one another than $\psi(X)$, which is not possible.  This establishes (i).  For (ii), if any face of $G(X)$ is not convex, then the non-convexity is witnessed by one of its vertices having interior angle greater than $\pi$. This vertex can then be shifted into the face, contradicting irreducibility.  Finally, (iii) follows from the fact that each face has perimeter at most $2\pi$, which in turn follows from Lemma~\ref{lem.spherical polygons} by taking $A$ to be the face in question, which is allowed by (ii), and $B$ to be any hemisphere that contains that face.
\end{proof}

Suppose $X \subseteq S$ contains a pair of antipodal points $x, -x \in X$.  As long as $|X| > 6$ so that $\psi(X) < \pi/2$, $x$ can be shifted if and only if $-x$ can be shifted as well.  It follows that by shifting antipodal vertices simultaneously, every antipodal $X \subseteq S^2$ with $|X| > 6$ and $G(X)$ reducible can be transformed to an antipodal $X' \subseteq S^2$ with $\psi(X') \geq \psi(X)$ such that either $G(X')$ is irreducible or $\psi(X') > \psi(X)$.  Hence, when classifying the contact graphs of optimal antipodal $2n$-packings $X$, it suffices to consider irreducible candidates for $G(X)$ and possible locations for isolated vertices.  In what follows, we make a few additional observations specific to antipodal sets, including a strengthening of \cref{svLemma}(iii).

\begin{lem}\label{antiLemma} Suppose $X \subseteq S^2$ is antipodal with $|X| > 6$ and $G(X)$ irreducible.
	\begin{enumerate}
		\item[(i)] The shortest path between an antipodal pair of vertices in $G(X)$ has at least $\lceil\pi/\psi(X)\rceil$ edges.
		\item[(ii)] Any two vertices incident to a common face of $G(X)$ are not antipodal.
		\item[(iii)]  Each face of $G(X)$ has length at most
	\[
		\left\lfloor \frac{2\pi}{\psi(X)} \sin\left(\frac{\pi - \psi(X)}{2}\right) \right\rfloor.
	\]
	\end{enumerate}
\end{lem}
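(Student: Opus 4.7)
My plan is to prove (i)--(iii) in sequence, each building on the previous.

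For (i), the triangle inequality applied to any path $x = u_0, u_1, \ldots, u_k = -x$ in $G(X)$ gives $k\psi(X) = \sum_{j=0}^{k-1} d(u_j, u_{j+1}) \geq d(x, -x) = \pi$, since every edge of $G(X)$ has length exactly $\psi(X)$. Hence $k \geq \lceil \pi/\psi(X)\rceil$.

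For (ii), I would argue by contradiction. Suppose some face $F$ of $G(X)$ has both $v$ and $-v$ on its boundary. By \cref{svLemma}(ii), $F$ is a convex spherical polygon, and its boundary decomposes into two arcs between $v$ and $-v$, each a path in $G(X)$. By (i), each arc has at least $\lceil \pi/\psi(X)\rceil$ edges, so $F$ has length at least $2\lceil \pi/\psi(X)\rceil$, while \cref{svLemma}(iii) gives length at most $\lfloor 2\pi/\psi(X)\rfloor$. Writing $\pi/\psi(X) = k + r$ with $0 \leq r < 1$, when $r > 0$ the lower bound $2k+2$ strictly exceeds the upper bound (which is $\leq 2k + 1$), a contradiction. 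In the edge case $\psi(X) = \pi/k$, both boundary arcs have length exactly $\pi = d(v,-v)$, so equality in the triangle inequality forces each arc to be a geodesic; consequently $\partial F$ lies on two great half-circles from $v$ to $-v$, and $F$ is a hemisphere or lune. Interior boundary vertices have degree at least three by \cref{svLemma}(i), so each carries at least one edge into the interior of either $F$ or $-F$; but $F$ and $-F$ are both faces by antipodal symmetry, so either choice subdivides a face and gives a contradiction.

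For (iii), the crucial antipodal observation is that for distinct vertices $v_i, v_j$ of $F$, $d(v_i, v_j) = \pi - d(v_i, -v_j) \leq \pi - \psi(X)$ (since $v_i \neq -v_j$ by (ii), and both lie in $X$). Thus the vertex set of $F$ has diameter at most $\pi - \psi(X)$, and by convexity $F$ itself has diameter at most $\pi - \psi(X)$. I would then establish that $F$ is contained in a closed spherical disk of radius $(\pi-\psi(X))/2$; given this, \cref{lem.spherical polygons} applied with $A = F$ and $B$ the enclosing disk yields perimeter bound $\ell\, \psi(X) \leq 2\pi\sin((\pi-\psi(X))/2)$, and dividing by $\psi(X)$ followed by flooring gives the stated edge-count bound.

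The main obstacle is the disk-containment step in (iii): the straightforward spherical analog of Jung's theorem only yields an enclosing disk of radius $(\pi-\psi(X))/\sqrt{3}$, which is too large to give the claimed perimeter bound. Sharpening this to radius $(\pi-\psi(X))/2$ has to exploit finer structure, namely that all edges of $F$ have the same length $\psi(X)$ and that $F$ arises as a face of a genuine antipodal packing (so that the $\psi(X)/2$-thickenings of $F$ and $-F$ are disjoint, as follows from the diameter bound applied to pairs in $F$ and $-F$). A Steiner-formula computation on these antipodally disjoint thickenings, combined with a Gauss--Bonnet accounting of interior angles constrained by the minimum-angle bound at each vertex of the face, seems the most promising route.
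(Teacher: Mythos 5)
Your part (i) is exactly the paper's argument, and your part (ii) follows the paper's skeleton: two boundary arcs each of at least $\lceil\pi/\psi(X)\rceil$ edges, a counting contradiction against \cref{svLemma}(iii) unless $\pi/\psi(X)$ is an integer, and a lune in the remaining case. But your finish of the lune case is wrong as written. The two arcs of $\partial F$ divide the sphere into $F$ and a complementary lune $F'$, and $F'$ is \emph{not} $-F$ in general: $-F$ is bounded by the antipodal arcs, i.e., the other halves of the same two great circles, so the sphere is cut by those circles into four lunes, of which $F$ and $-F$ are only two. A third edge at a vertex $u$ interior to a boundary arc can therefore point into $F'$, which is where the rest of the graph lives and need not be a face, so "degree at least three" does not force an edge into the interior of $F$ or $-F$ and yields no contradiction. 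The correct finish --- and the paper's --- is that if no edge at $u$ enters $F$, then all edges at $u$ lie in the closed hemisphere bounded by the great circle through $u$'s arc, so $u$ can be shifted into the lune, contradicting irreducibility.

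The genuine gap is in (iii), and you have diagnosed it yourself. You correctly isolate the paper's key antipodal observation, that $d(x,y)\leq\pi-\psi(X)$ for non-antipodal $x,y\in X$, and you correctly note that the diameter bound alone does not place $F$ in a cap of radius $(\pi-\psi(X))/2$ (Jung-type containment gives a larger radius). But having flagged this as "the main obstacle," you offer only a speculative sketch --- a Steiner-formula computation on thickenings plus a Gauss--Bonnet accounting --- with no actual argument; as it stands, (iii) is simply not proved. For comparison, the paper's proof asserts directly that each face lies in a spherical cap of geodesic diameter at most $\pi-\psi(X)$ and then applies \cref{lem.spherical polygons}; so to match the paper you would need to justify that containment, or else replace it with a diameter-to-perimeter inequality for spherically convex sets (the spherical analogue of Cauchy's bound $L\leq\pi\cdot\mathrm{diam}$, which can be approached via the spherical Crofton formula). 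Either way, the decisive quantitative step of (iii) is absent from your proposal.
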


\begin{proof}
	For (i), note that we need at least $\pi / \psi(X)$ edges of length $\psi(X)$ to form a path between $x, -x \in X$.  For (ii), suppose that $x$ and $-x$ were incident to a common face of $G(X)$.  This face must then consist of two disjoint paths of at least $\lceil{\pi / \psi(X)}\rceil$ edges.  Together with \cref{svLemma}(iii), we see that each path must form a geodesic of length $\pi$ and the face must form a spherical lune.  Since $\psi(X) < \pi/2$, we could then shift a point distinct from $x$, $-x$ into the face, demonstrating the reducibility of $G(X)$.  Hence, (ii) follows.  For (iii), observe that if a pair of points $x,y \in X$ are not an antipodal pair, then $d(x,y) \leq \pi - \psi(X)$; otherwise, the distance from $x$ to $-y$ would be less than $\psi(X)$.  Then every face of $G(X)$ lies in a spherical cap of geodesic diameter at most $\pi - \psi(X)$, which has perimeter at most $2\pi \sin(\frac{\pi - \psi(X)}{2})$.  \cref{lem.spherical polygons} then gives an upper bound on the perimeter of each convex face of $G(X)$, from which (iii) follows.
\end{proof}

A short argument involving \cref{lem.spherical polygons} forbids the isolated vertices of a contact graph from residing within a triangular face.  Additional restrictions for isolated vertices were observed in~\parencite[Lemma 2, Lemma 9]{boroczky03}.

\begin{lem}\label{isolatedLemma} Suppose $X \subseteq S^2$ and $|X| \geq 13$.
\begin{enumerate}
	\item[(i)] Every face of $G(X)$ containing an isolated vertex has length at least 6.
	\item[(ii)] No hexagonal face of $G(X)$ contains two isolated vertices.
\end{enumerate}
\end{lem}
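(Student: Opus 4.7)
The strategy is to star-triangulate $F$ around each isolated vertex and bound the resulting angles via the spherical law of cosines. A direct differentiation argument establishes the key inequality: for a spherical triangle with one side of length $\psi(X)$ and the two sides at a chosen vertex each of length at least $\psi(X)$, the angle at that vertex is bounded above by
\[
\beta(\psi(X)) := \arccos\bigl(\cos\psi(X)/(1+\cos\psi(X))\bigr),
\]
the angle of the equilateral spherical triangle with side $\psi(X)$, with equality only in the equilateral limit.

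For (i), let $z$ be an isolated vertex in a face $F$ of length $k$. Star-triangulating $F$ from $z$ produces $k$ spherical triangles $(z, v_i, v_{i+1})$ whose $v_i v_{i+1}$ sides equal $\psi(X)$ (as face boundary edges of $G(X)$) and whose two other sides strictly exceed $\psi(X)$ (since $z$ is isolated). The key inequality forces each of the $k$ angles at $z$ to be strictly less than $\beta(\psi(X))$, so summing to $2\pi$ yields $2\pi < k\beta(\psi(X))$. Now $|X| \geq 13$ forces $\psi(X) \leq \psi_{13} < \psi_{12} = \arccos(1/\sqrt{5})$ by the uniqueness of the icosahedron as the optimum of the Tammes problem at $n = 12$, so $\beta(\psi(X)) < \beta(\psi_{12}) = 2\pi/5$. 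Hence $k > 5$, giving $k \geq 6$.

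For (ii), assume for contradiction that a hexagonal face $F$ contains two isolated vertices $z_1, z_2$. Star-triangulating $F$ from $z_1$ places $z_2$ inside one of the six triangles, say $T = (z_1, v_k, v_{k+1})$; further star-triangulating $T$ from $z_2$ produces three sub-triangles. The key inequality controls the six angles at $z_1$ in the outer triangulation and the angle at $z_2$ opposite the hexagon edge $v_k v_{k+1}$, but the two remaining angles at $z_2$---opposite the sides $z_1 v_k$ and $z_1 v_{k+1}$, whose lengths strictly exceed $\psi(X)$ and may approach $\pi - \psi(X)$ by \cref{antiLemma}(iii)---are not directly bounded by $\beta(\psi(X))$. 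This is the main obstacle: the equilateral bound degenerates in the long-opposite-side regime, so a purely angular argument does not close. To finish, I would augment the angle-sum constraints at $z_1$ and $z_2$ with the global packing constraint on the eight pairwise $\psi(X)$-separated points $\{v_1, \ldots, v_6, z_1, z_2\}$ lying in $F$, arguing via an area or diameter comparison that the safe region $F \setminus \bigcup_i B(v_i, \psi(X))$ has diameter strictly less than $\psi(X)$ whenever $|X| \geq 13$, which rules out $d(z_1, z_2) > \psi(X)$ and delivers the contradiction.
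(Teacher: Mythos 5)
Your part (i) is essentially correct, and it is essentially the standard argument: note that the paper does not prove \cref{isolatedLemma} itself but defers to Lemmas 2 and 9 of \parencite{boroczky03}, and your star-triangulation is how the cited Lemma 2 is proved. One caveat: your ``key inequality'' is not unconditionally true on the sphere. Writing $a=\psi$ for the base and $b,c\geq\psi$ for the sides at the apex, the spherical law of cosines gives
\[
\frac{\partial}{\partial b}\left(\frac{\cos a-\cos b\cos c}{\sin b\sin c}\right)=\frac{\sin a\,\cos C}{\sin b\,\sin c},
\]
so the apex angle is decreasing in $b$ only while the base angle $C$ stays acute, and in the degenerate regime $b+c\to 2\pi-\psi$ the apex angle tends to $\pi$, not to $0$. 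You must therefore invoke that each triangle of the star-triangulation lies inside the face, which by \cref{svLemma}(ii) is a convex spherical polygon contained in an open hemisphere; on that restricted region the boundary analysis (the edge $b=\psi$ or $c=\psi$ attains the minimum value $\cos\psi/(1+\cos\psi)$, and the only interior critical point $b=c=\pi/2$ is a saddle with a larger value) does deliver the bound $\beta(\psi(X))$. With that patch, $2\pi\leq k\,\beta(\psi(X))$ and $\beta(\psi(X))<\beta(\psi_{12})=2\pi/5$ give $k\geq 6$ as you claim. (For this paper's application one can even bypass the Tammes input at $n=12,13$, since \cref{mu8Lemma} already forces $\psi(X)\leq 0.9273$.)

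Part (ii) is where the genuine gap is, and you have diagnosed it accurately without closing it. Star-triangulating from $z_1$ and then from $z_2$ produces angles at $z_2$ opposite sides whose lengths can far exceed $\psi(X)$, for which no analogue of the equilateral bound exists, so the angle-sum bookkeeping cannot finish. Your proposed repair---that the free region $F\setminus\bigcup_i B(v_i,\psi(X))$ of a hexagonal face has diameter less than $\psi(X)$---is announced (``I would augment\ldots, arguing via an area or diameter comparison'') but never established, and it does not follow from anything you have assembled: it requires a quantitative analysis of where, inside an equilateral spherical hexagon with edge length $\psi(X)$ and interior angles in the admissible range, points at distance at least $\psi(X)$ from all six vertices can lie. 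That estimate is precisely the content of Lemma 9 of \parencite{boroczky03}, which the paper cites for exactly this reason. So the honest summary is: your (i) reproduces the cited proof up to the hemisphere caveat, while your (ii) stops at the point where the real work of the cited proof begins.
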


\subsection{Generating Contact Graph Candidates}\label{gamma1Section}

We now construct a finite set of graphs which contains, up to isomorphism, the contact graph of an optimal antipodal 16-packing.  For the remainder of this section, let $\Phi$ denote an optimal projective 8-packing with corresponding optimal antipodal 16-packing $X = X(\Phi)$, where the discussion preceding \cref{svLemma} allows us to insist that $G(X)$ be irreducible without loss of generality.  By \cref{svLemma}(i), each non-isolated vertex in $X$ has degree 3, 4 or 5 in $G(X)$.  As $G(X)$ provides a double cover for $G([\Phi])$, each non-isolated vertex in $G([\Phi])$ has degree 3, 4 or 5 as well.

By deleting isolated vertices from a graph, we obtain its \emph{essential part}.  Observe that the essential part of $G([\Phi])$ is necessarily connected, since otherwise, $G(X)$ exhibits a face that is not simply connected, and therefore not a convex spherical polygon, violating \cref{svLemma}(ii). Moreover, the essential part of $G([\Phi])$ must have at least 6 vertices.  Otherwise, $G([\Phi])$ would have 3 isolated vertices, which would imply that the essential part of $G(X)$ consists of at most 5 antipodal pairs.  By \cref{isolatedLemma}(i), it would also contain a face of length 6, which is impossible since \cref{antiLemma}(ii) forbids antipodal pairs from lying on the same face.

We use \texttt{nauty}~\parencite{nauty} within SageMath~\parencite{sagemath} to generate connected graphs with order 6, 7, or 8, with minimum degree at least 3, and with maximum degree at most 5.  This produces a set $\Gamma_0$ of 1007 graphs, one of which must be isomorphic to the essential part of $G([\Phi])$.  To obtain candidates for $G(X)$, we generate all planar double covers for the graphs in $\Gamma_0$.  Given a graph $G$ on vertices $V$ with edges $E$, every double cover $\tilde{G}$ of $G$ can be represented on the vertex set $V \times \{0,1\}$ in the following way.  For each edge $\{v,w\} \in E$, either include in $\tilde{G}$ the edges $\{(v,0), (w,0)\}, \{(v,1), (w,1)\}$ or the edges $\{(v,0), (w,1)\}, \{(v,1), (w,0)\}$.  In this way, a graph with $m$ edges has $2^m$ double covers, many of which are often isomorphic.

We need only retain double covers that are connected, planar, and consistent with the constraints of this section.  It will be helpful to set $\psi_8 := \arccos(\mu_8)$; observe that $\psi(X) = \psi_8$ since $X$ is optimal by assumption.  \cref{mu8Lemma} provides the bounds
\begin{equation}\label{psi8Bound}
0.86638 \leq \psi_8 \leq 0.9273.
\end{equation}
The upper bound together with \cref{antiLemma}(i) ensures that every path between antipodal vertices $(v,0)$ and $(v,1)$ must travel along at least 4 edges; we eliminate graphs that violate this condition.  We also eliminate any graphs that contain a \emph{wheel} subgraph, or a cycle whose vertices all share a common neighbor.  Indeed, since the edge lengths of $G(X)$ are all equal, it could only contain a wheel graph on $n + 1$ vertices if the interior angle of each triangular face was $2\pi / n$; this is only possible on $S^2$ for $n = 3, 4, 5$, but each of these would require an edge length of $\psi(X) \geq 1.107$, contradicting \eqref{psi8Bound}.  Retaining only one isomorphic copy of each double cover that meets these requirements, we obtain a set $\Gamma_1$ of 547 planar graphs that contains an isomorphic copy of the essential part of $G(X)$.

Since we are considering reasonably small graphs, \texttt{nauty} generates $\Gamma_0$ very quickly.  However, extracting $\Gamma_1$ from $\Gamma_0$ is rather slow due to repeated graph isomorphism queries, taking several hours.  \cref{projLemma} indicates that we could first trim $\Gamma_0$ by insisting that each member be projective planar.  In principle, one could implement this step with the help of an existing fast algorithm for detecting projective planarity~\parencite[\S 15.8]{kocay17}.  Unfortunately, no such algorithm is currently available in SageMath, and so our computer-assisted proof does not incorporate this potential speedup.

\section{Enumerating Combinatorial Graph Embeddings}
\label{sec.combinatorial}

At this point, we have a sizable collection of contact graph candidates, and later, we will eliminate most of these by geometric considerations.
Before we can do this, we must identify the different planar embeddings of our contact graph candidates up to homeomorphism.
Such equivalence classes can be encoded combinatorially by associating with each vertex a counterclockwise cyclic ordering of its neighbors.
Given a planar embedding of a graph modulo homeomorphism, we call this encoding the corresponding \emph{combinatorial embedding} of that graph.
In this section, we describe a practical method to enumerate all combinatorial embeddings of a planar graph, which will allow us to recover all combinatorial embeddings for the graphs of $\Gamma_1$.

We first summarize some standard language from graph theory.  A \emph{cut vertex} for $G$ is a vertex whose deletion disconnects $G$; we say $G$ is \emph{separable} if it has a cut vertex or \emph{biconnected} otherwise.  The \emph{blocks} of a separable graph $G$ are its maximal biconnected subgraphs.  A \emph{separation pair} for $G$ is a pair of vertices whose simultaneous deletion disconnects $G$, and we say $G$ is \emph{triconnected} if it has no separation pair.

It is well known~\parencite{whitney33} that the triconnected planar graphs are exactly those with a unique combinatorial embedding, up to reversing the cyclic ordering of each vertex simultaneously.  As observed in \parencite{maclane37}, it is ``natural to try to reduce a graph $G$ which is not triply connected to triply connected constituents.''  For our purposes, since the combinatorial embeddings of the triconnected constituents are unique modulo reflection, such a decomposition would enable one to enumerate all combinatorial embeddings of the original graph.  Similar decompositions are crucial to efficient graph algorithms such as linear-time planarity tests~\parencite{hopcroft74}.

We will encode the triconnected constituents of graphs with the SPQR-tree data structure.  This was introduced in~\parencite{dibattista89} and provides an efficient representation of the set of all combinatorial embeddings of a biconnected planar graph~\parencite{dibattista96}.  The main idea is to decompose a biconnected graph $G$ into edge-disjoint subgraphs $\{G_i\}$; the combinatorial embeddings of $G$ can be obtained from those of $\{G_i\}$ by considering their possible relative arrangements.  If any of $\{G_i\}$ is triconnected, we can report their two combinatorial embeddings.  Otherwise, we can iterate the procedure for each biconnected $G_i$ and for each block of each separable $G_i$.  An example of such a decomposition is depicted in \cref{SPQRexampleFigure}.
While the formal definition of SPQR-trees is quite technical, we found the presentation in~\parencite{gutwenger10} to be particularly readable.

\begin{figure}[t!]
\begin{center}
\begin{subfigure}{0.5\textwidth}
\begin{center}
\begin{tikzpicture}
\SetVertexMath
\GraphInit[vstyle=Normal]
\Vertex[x = 1, y = 0] {g};
\Vertex[x = 1/2, y = 1] {f};
\Vertex[x = 2, y = 0] {a};
\Vertex[x = 3/2, y = 1] {d};
\Vertex[x = 5/2, y = 1] {b};
\Vertex[x = 2, y = 2] {e};
\Vertex[x = 3/2, y = -1] {c};
\Edges(e,d,b,e,f,g);
\Edges(g,c);
\Edges(g,a,c);
\Edges(b,a);
\Edges[style={bend right = 50}](f,c);
\end{tikzpicture}
\hspace{2em}
\begin{tikzpicture}
\SetVertexMath
\GraphInit[vstyle=Normal]
\Vertex[x = 1, y = 0] {g};
\Vertex[x = 1/2, y = 1] {f};
\Vertex[x = 2, y = 0] {a};
\Vertex[x = 3, y = 2] {d};
\Vertex[x = 5/2, y = 1] {b};
\Vertex[x = 2, y = 2] {e};
\Vertex[x = 3/2, y = -1] {c};
\Edges(e,d,b,e,f,g);
\Edges(g,c);
\Edges(g,a,c);
\Edges(b,a);
\Edges[style={bend right = 50}](f,c);
\end{tikzpicture}

\vspace{4em}

\begin{tikzpicture}
\SetVertexMath
\GraphInit[vstyle=Normal]
\Vertex[x = -1, y = 0] {g};
\Vertex[x = -1/2, y = 1] {f};
\Vertex[x = -2, y = 0] {a};
\Vertex[x = -3/2, y = 1] {d};
\Vertex[x = -5/2, y = 1] {b};
\Vertex[x = -2, y = 2] {e};
\Vertex[x = -3/2, y = -1] {c};
\Edges(e,d,b,e,f,g);
\Edges(g,c);
\Edges(g,a,c);
\Edges(b,a);
\Edges[style={bend left = 50}](f,c);
\end{tikzpicture}
\hspace{2em}
\begin{tikzpicture}
\SetVertexMath
\GraphInit[vstyle=Normal]
\Vertex[x = -1, y = 0] {g};
\Vertex[x = -1/2, y = 1] {f};
\Vertex[x = -2, y = 0] {a};
\Vertex[x = -3, y = 2] {d};
\Vertex[x = -5/2, y = 1] {b};
\Vertex[x = -2, y = 2] {e};
\Vertex[x = -3/2, y = -1] {c};
\Edges(e,d,b,e,f,g);
\Edges(g,c);
\Edges(g,a,c);
\Edges(b,a);
\Edges[style={bend left = 50}](f,c);
\end{tikzpicture}
\end{center}

\end{subfigure}
\hfill
\begin{subfigure}{0.4\textwidth}
\begin{center}
\begin{tikzpicture}

\draw (1.75,0) -- (1.75,3.5) -- (0,7) -- (-1.75,3.5);

\node[draw=black,minimum size=8.5em,fill=white] at (0,7)
{
\begin{tikzpicture}
\SetVertexMath
\GraphInit[vstyle=Normal]
\Vertex[x = 1/2, y = 1] {f};
\Vertex[x = 2, y = 0] {a};
\Vertex[x = 5/2, y = 1] {b};
\Vertex[x = 2, y = 2] {e};
\Edges(e,f);
\Edges(a,b);
\draw[name path=af1,gray,opacity=.5] (f) to [bend right] (a);
  \draw[name path=af2,gray,opacity=.5] (f) to [bend left] (a);  
  \tikzfillbetween[on layer=bg,
	of=af1 and af2
	]{gray,opacity=.5};
\draw[name path=eb1,gray,opacity=.5] (e) to [bend right=20] (b);
  \draw[name path=eb2,gray,opacity=.5] (e) to [bend left=20] (b);  
  \tikzfillbetween[on layer=bg,
	of=eb1 and eb2
	]{gray,opacity=.5};
\end{tikzpicture}
};

\node[draw=black,minimum size=8.5em,fill=white] at (-1.75,3.5)
{
\begin{tikzpicture}
\SetVertexMath
\GraphInit[vstyle=Normal]
\Vertex[x = 1, y = 0] {g};
\Vertex[x = 1/2, y = 1] {f};
\Vertex[x = 2, y = 0] {a};
\Vertex[x = 3/2, y = -1] {c};
\Edges(f,g);
\Edges(g,c);
\Edges(g,a,c);
\Edges[style={bend right=50}](f,c);
\Edges[style={dashed}](f,a);
\end{tikzpicture}
};

\node[draw=black,minimum size=8.5em,fill=white] at (1.75,3.5) 
{
\begin{tikzpicture}
\SetVertexMath
\GraphInit[vstyle=Normal]
\Vertex[x = 5/2, y = 1] {b};
\Vertex[x = 2, y = 2] {e};
\Edge[style={bend right=50}](b)(e);
\draw[name path=eb1,gray,opacity=.5] (e) to [bend left=20] (b);
  \draw[name path=eb2,gray,opacity=.5] (e) to [bend right=20] (b);  
  \tikzfillbetween[on layer=bg,
	of=eb1 and eb2
	]{gray,opacity=.5}
\Edge[style={dashed, bend right=50}](e)(b);
\end{tikzpicture}
};

\node[draw=black,minimum size=8.5em,fill=white] at (1.75,0)
{
\begin{tikzpicture}
	\SetVertexMath
  \GraphInit[vstyle=Normal]
	\Vertex[x = 3/2, y = 1] {d};
	\Vertex[x = 5/2, y = 1] {b};
	\Vertex[x = 2, y = 2] {e};
	\Edge[style={dashed}](e)(b);
	\Edges(e,d,b);
\end{tikzpicture}
};

\node at (.6,2.35) {P};
\node at (-2.9,2.35) {R};
\node at (-1.15,5.85) {S};
\node at (.6,-1.15) {S};

\end{tikzpicture}
\end{center}
\end{subfigure}
\end{center}
\caption{
The four combinatorial embeddings of a graph and the interior nodes of its SPQR-tree.
Each node of the SPQR-tree is labeled by a skeleton graph.  For every thickened gray virtual edges in a skeleton graph, there is a corresponding child node representing subgraph of the original graph.
Dashed reference edges denote the ancestry of the SPQR-tree node, and solid black edges correspond to edges in the original graph.
We can enumerate the combinatorial embeddings of a graph from its SPQR-tree by reversing the orientation of R node graphs, which are necessarily triconnected, and permuting the edges of P node graphs.
}\label{SPQRexampleFigure}
\end{figure}
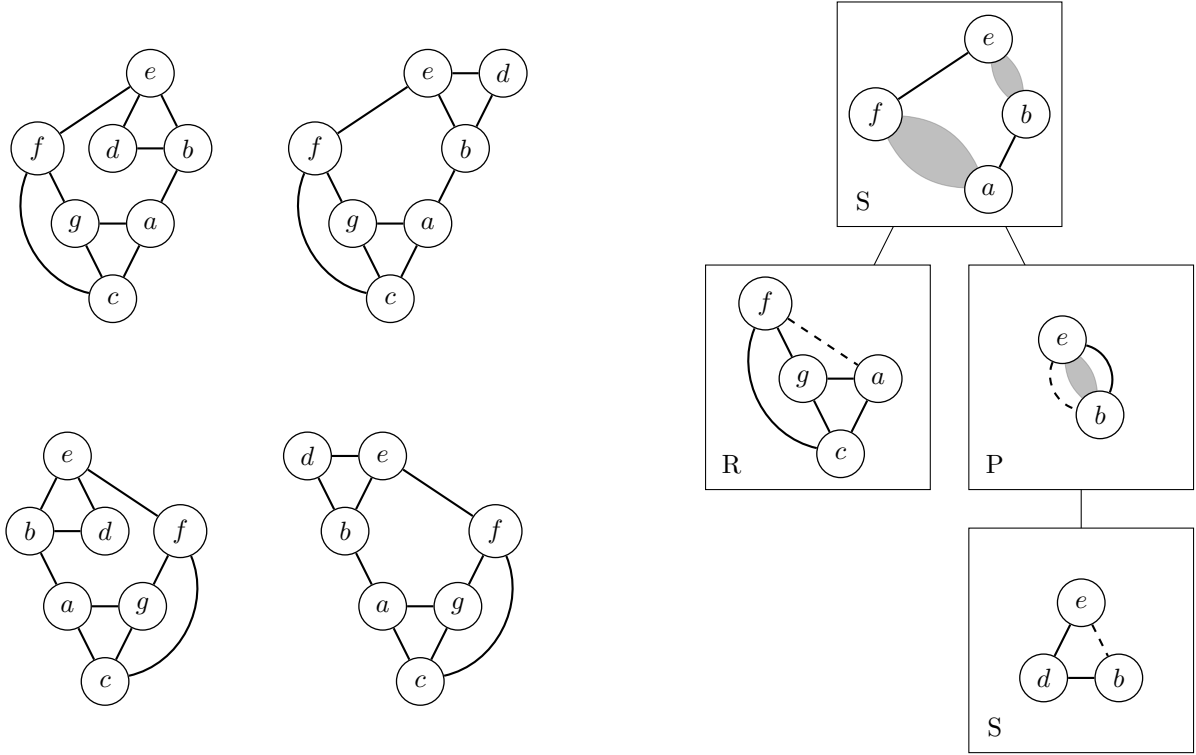

\subsection{Generating Combinatorial Embeddings}\label{e1Section}

In \cref{gamma1Section}, we generated a set $\Gamma_1$ consisting of 547 planar graphs, one of which is guaranteed to be the essential part of $G(X)$ for some optimal antipodal 16-packing $X$.  We now generate combinatorial embeddings of these graphs, and one of these necessarily corresponds to the planar embedding of $G(X)$ modulo homeomorphism and reflection.  Of the 547 graphs in $\Gamma_1$, 263 are triconnected, each producing a unique combinatorial embedding modulo reflection.  The remaining 284 graphs are biconnected, so we can produce reflection representatives of their combinatorial embeddings by constructing their SPQR-trees.  All together, we generate a set $\mathcal{E}_0$ of 2540 combinatorial embeddings, but only a fraction of these exhibit the necessary facial structure identified in the previous section.
In particular, by \cref{antiLemma}(iii), all faces must have length at most six, and by \cref{isolatedLemma}, there must be a hexagonal face for every isolated vertex.
The members of $\mathcal{E}_0$ meeting these requirements form a set $\mathcal{E}_1$ consisting of only 217 combinatorial embeddings.  Five of these have order 14, while the remaining 212 have order 16.  One of these must represent the combinatorial embedding of $G(X)$.

\section{Geometric Constraints}\label{geometricSection}

At this point, we have identified a set $\mathcal{E}_1$ comprised of 217 combinatorial embeddings of contact graph candidates, and at least one of these corresponds to an optimal antipodal 16-packing.
In this section, we leverage geometric constraints to eliminate almost all of these embeddings from consideration.
In principle, determining $\psi_8$ amounts to maximizing $\psi$ such that an embedding from $\mathcal{E}_1$ can be drawn on $S^2$ with equilateral edge length $\psi$.  To help describe the feasibility region of this optimization problem, we take the interior angles $\{\theta_i\}$ of the faces of a given embedding to be the decision variables.
These angles must satisfy certain relationships imposed by spherical geometry, which results in linear and nonlinear constraints.  We will relax each of these constraints to produce a linear program that must be feasible if the embedding corresponds to an optimal antipodal 16-packing.
Note that this same approach was taken in~\parencite{musin12, musin15} to solve instances of the Tammes problem.

A fundamental tool here is the \emph{spherical law of cosines}.

\begin{lem}\label{slocLemma}
If $a,b,c$ are geodesic side lengths for a spherical triangle with interior angle $\theta$ opposite $a$, then
\[
	\cos(a) = \cos(b)\cos(c) + \sin(b)\sin(c)\cos(\theta).	
\]
\end{lem}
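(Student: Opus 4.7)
The plan is to prove the spherical law of cosines by embedding the triangle on $S^2\subseteq \mathbf{R}^3$ and computing an explicit inner product. Identifying the three vertices with unit vectors $A,B,C\in S^2$, the geodesic side lengths translate directly into inner products via the definition of $d$: namely $\langle A,B\rangle = \cos(c)$, $\langle A,C\rangle = \cos(b)$, and $\langle B,C\rangle = \cos(a)$. The task therefore reduces to expressing $\langle B,C\rangle$ in terms of $b$, $c$, and the interior angle $\theta$ opposite $a$, which sits at the vertex $A$.

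To do this, I would exploit rotational invariance of the inner product by placing $A$ at the north pole $(0,0,1)$ and then rotating about the $z$-axis so that $B=(\sin c,\,0,\,\cos c)$. The remaining vertex can then be written as $C=(\sin b\cos\alpha,\,\sin b\sin\alpha,\,\cos b)$ for some azimuth $\alpha$. The key observation is that $\alpha$ coincides with the interior angle $\theta$: the geodesic from $A$ to any point $P\in S^2\setminus\{-A\}$ is a great-circle arc whose unit tangent at $A$ lies in the equatorial plane $\{z=0\}$ and points toward the equatorial projection of $P$. Hence the tangents at $A$ to the arcs $AB$ and $AC$ differ in direction by exactly $\alpha$, and by the definition of the interior angle of a spherical polygon this must equal $\theta$.

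With this identification in hand, a direct computation gives
\[
\langle B,C\rangle = \sin b\sin c\cos\theta + \cos b\cos c,
\]
and combining with $\langle B,C\rangle=\cos(a)$ yields the claimed identity. The only conceptual step is verifying that the azimuthal angle at the pole equals the interior angle, which follows from the standard description of great-circle tangents; every other step is either a change of coordinates by an orthogonal transformation (which preserves both geodesic distances and interior angles) or a line of elementary trigonometry, so I do not anticipate any real obstacle.
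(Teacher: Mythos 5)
Your argument is correct. Note, however, that the paper offers no proof of this lemma at all: it is stated as a classical fact (the spherical law of cosines) and used as a black box, so there is nothing in the source to compare your approach against. Your coordinate derivation is the standard one and is complete: placing $A$ at the north pole, writing $B=(\sin c,0,\cos c)$ and $C=(\sin b\cos\alpha,\sin b\sin\alpha,\cos b)$, and identifying the azimuth $\alpha$ with the interior angle at $A$ via the tangent directions $(1,0,0)$ and $(\cos\alpha,\sin\alpha,0)$ gives exactly $\cos a=\langle B,C\rangle=\cos b\cos c+\sin b\sin c\cos\alpha$. The only point worth making explicit is that the azimuth of $C$ might equal $2\pi-\theta$ rather than $\theta$ depending on orientation, but since $\cos(2\pi-\theta)=\cos\theta$ the identity is unaffected; you may also wish to note the implicit assumption $0<b,c<\pi$ so that the tangent directions (and the normalization by $\sin b$, $\sin c$) are well defined, which is the standard nondegeneracy hypothesis for a spherical triangle.
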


Let $\alpha_n(d)$ denote the interior angle of a spherical regular $n$-gon of side length $d$.  We record some straightforward consequences of \cref{slocLemma}.

\begin{lem}\label{triangleLemma}
An equilateral spherical triangle with side length $\psi$ has interior angle
\[
	\alpha_3(\psi) = \arccos\left(\frac{\cos\left(\psi\right)}{1 + \cos\left(\psi\right)}\right).
\]
The opposite interior angles within a spherical rhombus are equal.  If a rhombus has side length $\psi$, then its adjacent interior angles $\theta, \theta'$ satisfy
\[
	\tan(\theta/2)\tan(\theta'/2) = \sec(\psi).
\]
In general, for an equilateral convex spherical polygon with $n$ sides of length $\psi$, we can express each of its interior angles $\theta_1, \ldots, \theta_n$ as a function of $\psi$ and any $n - 3$ of its interior angles.
\end{lem}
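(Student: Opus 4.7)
The plan is to apply \cref{slocLemma} in each of the three settings and carry out the ensuing algebra.

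For the equilateral triangle, I would substitute $a=b=c=\psi$ with opposite angle $\alpha_3(\psi)$ to obtain $\cos\psi = \cos^2\psi + \sin^2\psi\cos\alpha_3(\psi)$, and then solve for $\cos\alpha_3(\psi)$ using the factorization $\sin^2\psi = (1-\cos\psi)(1+\cos\psi)$ to produce the stated closed form.

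For the rhombus $ABCD$ with side $\psi$, I would first establish that opposite angles are equal and that each diagonal bisects the two angles at its endpoints. The diagonal $BD$ splits the rhombus into triangles $ABD$ and $CBD$ with matching side lengths $\psi,\psi,|BD|$; SSS congruence (a consequence of the spherical law of cosines) yields both $\angle A = \angle C$ and the bisection claim, and symmetrically for $AC$. The unique isometry of $S^2$ fixing $B,D$ and swapping $A,C$ is the reflection across the great circle through $B,D$, which forces $AC \perp BD$ at the midpoint of $AC$; by symmetry the diagonals bisect each other orthogonally at a common midpoint $O$. Writing $d=|BD|$ and $e=|AC|$, the spherical law of cosines at the right angle in triangle $OAB$ gives $\cos\psi = \cos(d/2)\cos(e/2)$. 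In the isosceles triangle $ABD$ (angle $\theta$ at $A$ and angle $\theta'/2$ at each of $B,D$), the spherical law of cosines applied at $A$ combined with half-angle identities gives $\sin(d/2) = \sin\psi\sin(\theta/2)$, while applied at $B$ it gives $\tan(d/2) = \tan\psi\cos(\theta'/2)$; dividing yields $\cos(d/2) = \cos\psi\sin(\theta/2)/\cos(\theta'/2)$. A symmetric computation in triangle $ABC$ produces $\cos(e/2) = \cos\psi\sin(\theta'/2)/\cos(\theta/2)$. Substituting into $\cos\psi = \cos(d/2)\cos(e/2)$ and simplifying yields $\cos\psi = \cos^2\psi\,\tan(\theta/2)\tan(\theta'/2)$, from which the product formula follows.

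For the general equilateral convex $n$-gon, I would triangulate by diagonals from $V_1$ to each of $V_3,\ldots,V_{n-1}$, producing $n-2$ triangles and $n-3$ new diagonal lengths. Starting with triangle $V_1V_2V_3$, the spherical law of cosines determines $|V_1V_3|$ from the angle $\theta_2$ and the sides $\psi,\psi$. Inductively, in triangle $V_1V_kV_{k+1}$, subtracting the angle at $V_k$ contributed by triangle $V_1V_{k-1}V_k$ (already known) from $\theta_k$ gives the included angle between the known sides $|V_1V_k|$ and $\psi$, whence $|V_1V_{k+1}|$ follows. Iterating through the $n-3$ consecutive angles $\theta_2,\ldots,\theta_{n-2}$ determines all diagonal lengths, and hence the three remaining angles $\theta_1,\theta_{n-1},\theta_n$. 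For an arbitrary subset of $n-3$ prescribed angles, the claim reduces to a degrees-of-freedom count: the shape space of equilateral spherical $n$-gons with side $\psi$ has dimension $n-3$.

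The main technical obstacle is part~(ii): the product identity $\tan(\theta/2)\tan(\theta'/2) = \sec\psi$ only emerges after combining the spherical law of cosines applied at several vertices (in both the isosceles triangle $ABD$ and the right triangle $OAB$) with half-angle manipulations, and the more obvious attempts (e.g., working directly with the expression for $\cos d$) do not readily produce the stated closed form.
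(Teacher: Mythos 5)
The paper offers no proof of this lemma at all---it is recorded as a list of ``straightforward consequences'' of \cref{slocLemma}---so there is no argument to compare yours against line by line; your write-up is a legitimate filling-in of that omission. The triangle computation is exactly the intended one. The rhombus argument is correct: applying \cref{slocLemma} at the apex and at a base vertex of the isosceles triangle $ABD$ does give $\sin(d/2)=\sin\psi\sin(\theta/2)$ and $\tan(d/2)=\tan\psi\cos(\theta'/2)$, and combining these with the right-angle relation $\cos\psi=\cos(d/2)\cos(e/2)$ yields $\tan(\theta/2)\tan(\theta'/2)=\sec\psi$ as you claim. It can be shortened, though: the two relations from triangle $ABD$ alone give $\sin(d/2)=\sin\psi\sin(\theta/2)$ and $\cos(d/2)=\cos\psi\sin(\theta/2)/\cos(\theta'/2)$, and substituting these into $\sin^2(d/2)+\cos^2(d/2)=1$ already produces $\tan^2(\theta/2)\tan^2(\theta'/2)=\sec^2\psi$; the second diagonal, the perpendicularity of the diagonals, and the point $O$ are not needed, so your ``main technical obstacle'' is less of an obstacle than you suggest.

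The one genuine gap is in the final clause. Your fan triangulation correctly shows that $\psi$ together with the $n-3$ \emph{consecutive} angles $\theta_2,\dots,\theta_{n-2}$ determines all diagonals and hence the remaining three angles, and this consecutive version is all that is needed to propagate interval bounds around faces in \cref{eliminateSection}. But for an \emph{arbitrary} subset of $n-3$ angles you appeal only to a dimension count, and that is not a proof: agreement of dimensions does not rule out a singular Jacobian or several non-congruent equilateral $n$-gons sharing the same $n-3$ prescribed angles, so ``can be expressed as a function of'' does not follow. For example, for a hexagon you would need to show that $\psi$ and the alternating angles $\theta_1,\theta_3,\theta_5$ determine $\theta_2,\theta_4,\theta_6$, and nothing in the fan triangulation delivers that. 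Either restrict the claim to consecutive angles (which suffices for the rest of the paper) or supply an actual injectivity argument for general subsets.
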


We have the following additional restrictions on the interior angles of a contact graph.

\begin{lem}\label{linearLemma}
Let $X \subseteq S^2$ and set $\psi = \psi(X)$.
\begin{enumerate}
	\item[(i)] Every interior angle $\theta$ of $G(X)$ satisfies $\alpha_3(\psi) \leq \theta \leq \pi$.
	\item[(ii)] The sum of the interior angles that meet at a vertex in $G(X)$ equals $2\pi$.
	\item[(iii)] The sum of the interior angles of a face of length $n$ in $G(X)$ is at most $n\alpha_n(\psi)$.
	\item[(iv)] Suppose $\alpha,\beta,\gamma,\delta,\epsilon$ are the interior angles of a pentagonal face of $G(X)$ ordered cyclically.  Then $\alpha \geq \epsilon$ only if $\delta \geq \beta$.
	\item[(v)] Suppose $\alpha,\beta,\gamma,\delta,\epsilon,\zeta$ are the interior angles of a hexagonal face of $G(X)$ ordered cyclically. Then $\alpha \geq \delta$ only if both $\gamma \geq \zeta$ and $\epsilon \geq \beta$.
%
\end{enumerate}
\end{lem}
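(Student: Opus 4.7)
The plan is to handle the five claims separately, all reducing to the spherical law of cosines (\cref{slocLemma}).

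For (i), the bound $\theta \leq \pi$ follows from convexity of faces in \cref{svLemma}(ii). For the lower bound, I fix a vertex $x$ and a face meeting $x$ with interior angle $\theta$, bounded by neighbors $y,z \in X$. Since $y \neq z$, $d(y,z) \geq \psi$, while $d(x,y) = d(x,z) = \psi$, so \cref{slocLemma} applied to $\triangle xyz$ gives
\[
\cos(\psi) \geq \cos(d(y,z)) = \cos^2(\psi) + \sin^2(\psi)\cos(\theta),
\]
which rearranges to $\cos(\theta) \leq \cos(\psi)/(1+\cos(\psi))$, i.e., $\theta \geq \alpha_3(\psi)$ by \cref{triangleLemma}. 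Part (ii) is immediate, since the interior angles at a vertex partition a small spherical disk of total angular measure $2\pi$.

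For (iii), I would invoke the Gauss--Bonnet formula, by which the angle sum of a convex spherical $n$-gon equals $(n-2)\pi$ plus its area. It therefore suffices to show that among equilateral convex spherical $n$-gons of side $\psi$, the regular one maximizes area; this is a classical isoperimetric fact provable by a symmetrization argument that preserves perimeter via \cref{lem.spherical polygons}.

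The main obstacle lies in (iv) and (v), both of which express a symmetry-breaking monotonicity within the family of equilateral convex spherical polygons of side $\psi$. In each case the symmetric configuration (reflection through $v_3$ for the pentagon, central inversion $v_i \mapsto v_{i+3}$ for the hexagon) simultaneously equates the pairs appearing in the implication, and the claim asserts that breaking one equality in a specified direction forces the others to break in consistent directions. I would verify this by parametrizing the two-parameter family of equilateral spherical pentagons (three-parameter for hexagons), writing the closed-form relations between interior angles via \cref{slocLemma}, and checking the implications by direct computation. Concretely, for (iv) I plan to split the pentagon along the diagonal $v_1 v_3$ into an isoceles triangle at $v_2$ and an equilateral-sided quadrilateral, expressing each unknown angle and diagonal in closed form, and then extracting the desired inequality. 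Part (v) would be treated analogously after bisecting the hexagon along $v_1 v_4$ and analyzing the two resulting four-sided pieces. I expect this algebra to be routine but delicate; the main risk is ensuring the implications hold throughout the feasible range of $\psi$ from \eqref{psi8Bound} rather than only in an infinitesimal neighborhood of the symmetric configurations.
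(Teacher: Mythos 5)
Your treatment of parts (i)--(iii) is essentially the paper's. The lower bound in (i) comes from $d(y,z)\geq\psi$ together with \cref{slocLemma} (the paper states this in one line; your expanded computation is correct, and the convexity from \cref{svLemma}(ii) gives $\theta\leq\pi$), (ii) is immediate, and (iii) reduces via Gauss--Bonnet to the fact that the regular $n$-gon maximizes area among equilateral convex spherical $n$-gons of side $\psi$ --- the paper likewise defers this to ``a version of the isoperimetric inequality'' without proof. One quibble: \cref{lem.spherical polygons} is a perimeter-monotonicity statement for nested convex regions, and it is not by itself the engine of a symmetrization proof of that isoperimetric fact; you would still need a genuine two-point or Steiner-type symmetrization argument there.

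The substantive issue is (iv) and (v). The paper does not prove these at all: it cites \parencite[Proposition~5]{danzer86} and \parencite[Proposition~12]{boroczky03}, respectively. You propose instead to rederive them by parametrizing equilateral convex spherical pentagons and hexagons, splitting along a diagonal, and ``checking the implications by direct computation.'' As written this is a plan, not a proof: the monotone coupling of the angle pairs under breaking the reflective (resp.\ central) symmetry is exactly the nontrivial content of the cited propositions, and it is a global statement over the full moduli space of such polygons, not a perturbative one near the symmetric configuration. Your own caveat identifies this risk correctly, but nothing in the proposal discharges it; the ``routine but delicate algebra'' is where the entire difficulty of (iv) and (v) lives, and it must be carried out uniformly over the admissible range of $\psi$ and over all convex configurations (tracking, e.g., the monotone dependence of each diagonal on the adjacent interior angles). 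Either execute that computation in full or do what the paper does and cite the known results.
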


\begin{proof}
For (i), note that every interior angle $\theta$ must be at least $\alpha(\psi)$ since vertices of $G(X)$ must be of geodesic distance $\psi$ apart, while the convexity of the faces guarantees $\theta \leq \pi$.  Next, (ii) is obvious.  Since the area of a spherical polygon is determined by the sum of its interior angles, (iii) follows from a version of the isoperimetric inequality that compares the face's area to that of a regular spherical $n$-gon of the same perimeter.  The proofs of (iv) and (v) can be found in \parencite[Proposition~5]{danzer86} and \parencite[Proposition~12]{boroczky03}, respectively.
\end{proof}

Antipodal packings satisfy additional constraints that do not appear in the relevant Tammes literature.

\begin{lem}\label{lem.path angles}
Let $X \subseteq S^2$ be antipodal and set $\psi = \psi(X)$.  Suppose $G(X)$ contains a path from $x\in X$ to $-x$ along four edges $e_1, e_2, e_3, e_4$.  Denote the angle between $e_1$ and $e_2$ by $\alpha$, the angle between $e_2$ and $e_3$ by $\beta$, and the angle between $e_3$ and $e_4$ by $\gamma$.  Then
\[
	\cos(\alpha) + \cos(\gamma) = -2\cot^2(\psi) \qquad \text{ and} \qquad \cos(\beta) \leq 1 - 2\cot^2(\psi).
	\]
\end{lem}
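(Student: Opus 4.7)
The plan is to translate the problem into Cartesian coordinates: write the path vertices as unit vectors $v_0, v_1, v_2, v_3, v_4 \in S^2$ with $v_4 = -v_0$ and $\langle v_i, v_{i+1}\rangle = \cos(\psi)$ for $0 \leq i \leq 3$. Each consecutive triple $v_{i-1}, v_i, v_{i+1}$ then forms an isosceles spherical triangle whose two equal legs have length $\psi$ and whose apex angle at $v_i$ is one of $\alpha, \beta, \gamma$. The strategy is to compute inner products like $\langle v_0, v_2\rangle$ via \cref{slocLemma} and then exploit the antipodal relation $v_4 = -v_0$ in two different guises.

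For the first identity, applying \cref{slocLemma} to the triangle $v_0 v_1 v_2$ (apex $v_1$, opposite side $v_0 v_2$) gives
\[
\langle v_0, v_2\rangle \;=\; \cos^2(\psi) + \sin^2(\psi)\cos(\alpha),
\]
and the analogous computation in triangle $v_2 v_3 v_4$ gives the same expression with $\gamma$ in place of $\alpha$ and $\langle v_2, v_4\rangle$ in place of $\langle v_0, v_2\rangle$. Because $v_4 = -v_0$, the two left-hand sides are negatives of each other, so adding the equations cancels them and produces $2\cos^2(\psi) + \sin^2(\psi)(\cos\alpha + \cos\gamma) = 0$; dividing by $\sin^2(\psi)$ yields the claimed identity.

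For the inequality, the same application of \cref{slocLemma} to triangle $v_1 v_2 v_3$ yields $\langle v_1, v_3\rangle = \cos^2(\psi) + \sin^2(\psi)\cos(\beta)$, so it suffices to bound $\langle v_1, v_3\rangle$ from above. Here antipodality enters metrically rather than algebraically: since $v_4 = -v_0$, we have $d(v_1, v_4) = \pi - \psi$, and the reverse triangle inequality applied to $v_1, v_3, v_4$ together with $d(v_3, v_4) = \psi$ yields $d(v_1, v_3) \geq \pi - 2\psi$. Taking cosines (order-reversing on $[0,\pi]$, valid since $\psi \leq \pi/2$ for any antipodal set of more than two points), I obtain $\langle v_1, v_3\rangle \leq -\cos(2\psi)$, and substituting $\cos(2\psi) = 2\cos^2(\psi) - 1$ rearranges this to $\cos(\beta) \leq 1 - 2\cot^2(\psi)$.

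No step is conceptually hard; the main point to get right is recognizing the two distinct roles played by the antipodal condition, namely the algebraic identity $v_4 = -v_0$ that forces cancellation in the first part and the metric identity $d(v_1, v_4) = \pi - \psi$ that converts the path structure into a useful geometric bound in the second.
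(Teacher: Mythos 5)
Your proposal is correct and follows essentially the same route as the paper: both parts apply \cref{slocLemma} to the isosceles triangles $v_0v_1v_2$, $v_2v_3v_4$, and $v_1v_2v_3$, use $d(v_0,v_2)+d(v_2,v_4)=\pi$ (equivalently $\langle v_2,v_4\rangle=-\langle v_0,v_2\rangle$) to cancel terms for the identity, and use the triangle-inequality bound $d(v_1,v_3)\geq\pi-2\psi$ for the inequality. The only cosmetic difference is that you phrase the computation in terms of inner products rather than geodesic distances, and you spell out the monotonicity caveat $\psi\leq\pi/2$ that the paper leaves implicit.
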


\begin{proof}
Suppose the path along $e_1, e_2, e_3, e_4$ passes through the vertices $x, x_1, x_2, x_3, -x$.  Connecting $x$ to $x_2$ by a geodesic segment results in a spherical triangle with side lengths $\psi, \psi, d(x,x_2)$, where $d(x,x_2)$ has opposite angle $\alpha$. Similarly, connecting $x_2$ to $-x$ by a geodesic segment results in a spherical triangle with side lengths $\psi, \psi, d(x_2, -x)$, where $d(x_2,-x)$ has opposite angle $\gamma$.  Moreover, we have $d(x,x_2) + d(x_2,-x) = \pi$ since $x$ and $-x$ are antipodal.  Applying \cref{slocLemma},
\[
	\cos(\alpha) + \cos(\gamma) = \frac{\cos(d(x,x_2)) - \cos^2(\psi)}{\sin^2(\psi)} + \frac{\cos(d(x_2,-x)) - \cos^2(\psi)}{\sin^2(\psi)} = -2\cot^2(\psi).
\]
Similarly, connecting $x_1$ to $x_3$ results in a spherical triangle with side lengths $\psi,\psi,d(x_1,x_3)$, where $d(x_1,x_3)$ has opposite angle $\beta$.  Moreover, we know that $d(x_1,x_3) \geq \pi - 2\psi$ since we are traveling between antipodal points $x$ and $-x$.  Applying \cref{slocLemma},
\[
	\cos(\beta) = \frac{\cos(d(x_1,x_3)) - \cos^2(\psi)}{\sin^2(\psi)} \leq \frac{-\cos(2\psi) - \cos^2(\psi)}{\sin^2(\psi)} = 1 - 2\cot^2(\psi). \qedhere
\]
\end{proof}

\subsection{Eliminating Combinatorial Embeddings}\label{eliminateSection}

From \cref{e1Section}, we have a set $\mathcal{E}_1$ of 217 combinatorial embeddings.  Fix an embedding $E$ in $\mathcal{E}_1$, let $\{\theta_i\}_{i\geq1}$ denote the interior angles of each of its faces, and set
\[
	m_i := \inf \theta_i \qquad \text{ and} \qquad	M_i := \sup \theta_i,
\]
where the infimum and supremum are taken over all drawings of $E$ on $S^2$ with equilateral geodesic edge length $\psi_8$.  Supposing that such a drawing exists, then for any index $i$, a conclusion of the form $m_i > M_i$ produces a contradiction, and so we may eliminate $E$ from consideration.

Recall that \cref{mu8Lemma} implies $0.86638 \leq \psi_8 \leq 0.9273$, and so \cref{triangleLemma} gives $1.1668 \leq \alpha_3(\psi_8) \leq 1.1864$.  Moreover, by \cref{linearLemma}(i), each interior angle $\theta_i$ satisfies $1.1668 \leq \theta_i \leq \pi$.  This provides us with initial lower bounds on each $m_i$ and initial upper bounds on each $M_i$.  Next, we relax by treating the edge length $\psi$ as another decision variable.
Denote
\[
m_0 := \inf \psi \qquad \text{and} \qquad M_0 := \sup \psi,
\]
where the infimum and supremum are taken over all drawings of the embedding with edge length $\psi$ satisfying $0.86638\leq \psi\leq0.9273$ and with interior angles $\theta_i\in[m_i, M_i]$.  Of course, if the embedding corresponds to an optimal contact graph, then $M_0 = \psi_8$.  We can use these bounds and simple interval arithmetic to relax each of the nonlinear constraints from the previous section to linear constraints and use linear programming to determine improved bounds on $m_i$ and $M_i$.
For instance, suppose we currently have bounds of the form $m_0\geq a$ and $M_0\leq b$. Then \cref{triangleLemma} and \cref{linearLemma}(i) together imply
\[
m_i
\geq\alpha_3(\psi)
\geq\arccos\bigg(\frac{\cos(a)}{1+\cos(b)}\bigg)
\]
for every $i\geq1$.
In a similar way, the remainder of \cref{triangleLemma} and also \cref{lem.path angles} can be used to improve estimates of various $m_i$ and $M_i$ from existing estimates of other $m_j$ and $M_j$.
In addition, every part of \cref{linearLemma} converts existing bounds on $m_0$ and $M_0$ into inequality constraints on all of $\{\theta_i\}_{i\geq1}$, allowing one to improve existing estimates on each $m_i$ and $M_i$ by linear programming.
As one might expect, this process benefits from iteration.  If our estimates of any $m_i$ and $M_i$ ever imply $m_i>M_i$, then we may eliminate the corresponding embedding from consideration.  For each member of $\mathcal{E}_1$, we follow this general procedure for several iterations, and as a result, only 17 combinatorial embeddings remain.

For the remaining 17 embeddings, we improve our bounds on each $m_i$ and $M_i$ as follows.  Suppose that we know $m_0 \geq a$ and $M_0 \leq b$.  Then we can partition $[a,b]$ into several subintervals $[a,b] = [a_1, b_1] \cup \cdots \cup [a_k, b_k]$ and repeat the same procedure as in the previous paragraph under the assumption $\psi \in [a_j,b_j]$.  Each linear program involves relaxations of several nonlinear constraints on the interior angles $\theta_i$ involving $\psi$, so it comes as no surprise that we have stronger bounds on each $\theta_i$ when we restrict $\psi$ to a small subinterval.  For several embeddings, each subinterval $[a_j,b_j]$ results in an infeasible program, allowing us to eliminate that embedding.  In other cases, we instead obtain improved bounds on each $m_i$ and $M_i$.  After this stage, only seven combinatorial embeddings remain.

We remove a few more embeddings from consideration by simultaneously bisecting feasiblility regions for $\psi$ and each interior angle for a given face.  For a face of length $n$, this results in $2^{n + 1}$ regions that can be checked for feasibility individually.  Two of our embeddings lead to infeasible programs in all $2^7$ regions corresponding to one of their hexagonal faces.  For others, this again leads to improved upper bounds on $\psi$, and iterating our procedure finally shows that one last embedding leads to an infeasible program.

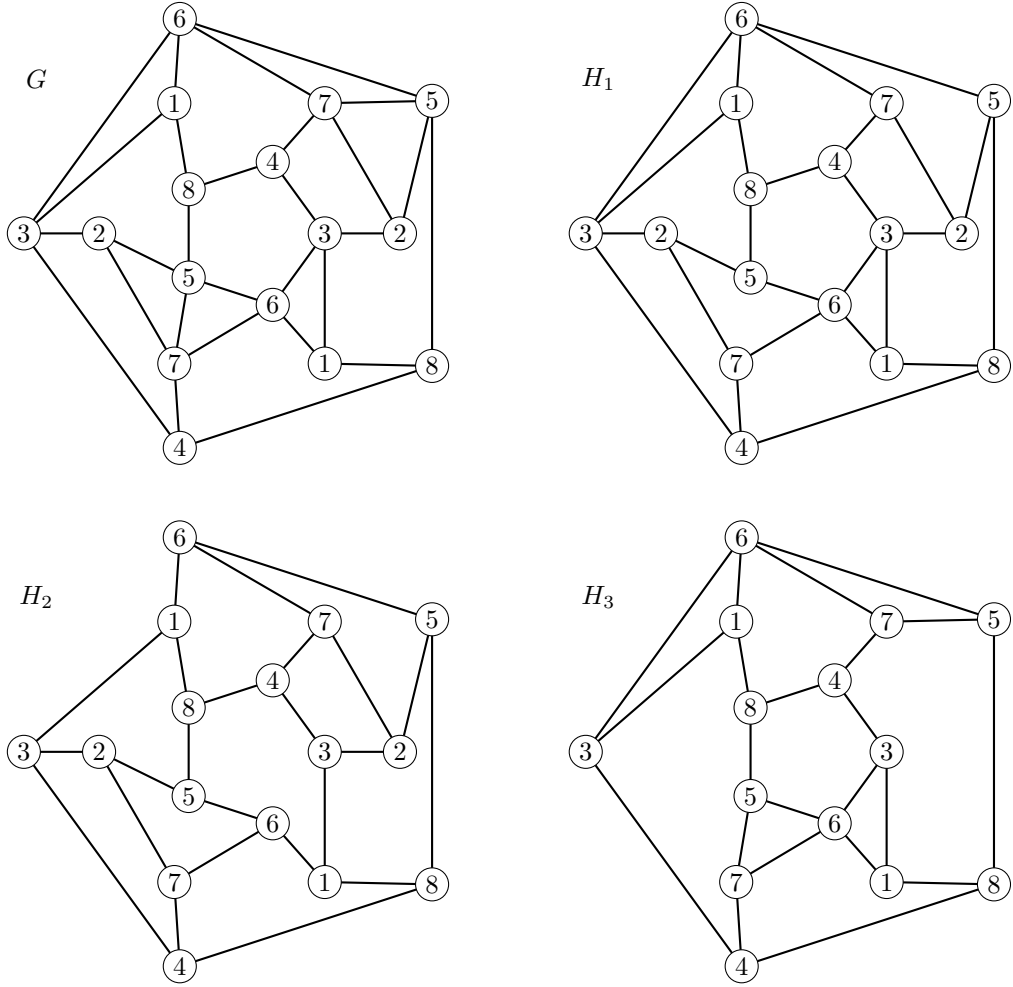
\begin{figure}
\hfill
\begin{tikzpicture}

\node at (144:3.5) {$G$};

\SetVertexMath
\GraphInit[vstyle=Normal]

\tikzset{VertexStyle/.style = {
inner sep = 0pt,
outer sep = 0pt,
minimum size = 1.25em,
shape = circle,
draw}}

\Vertex[a = 0,d = 1,L={3}]{x_3}
\Vertex[a = 72,d = 1,L={4}]{x_4}
\Vertex[a = 144,d = 1,L={8}]{x_8}
\Vertex[a = 216,d = 1,L={5}]{x_5}
\Vertex[a = 288,d = 1,L={6}]{x_6}

\Vertex[a=0,d=2,L={2}]{-x_2}
\Vertex[a=60,d=2,L={7}]{-x_7}
\Vertex[a=120,d=2,L={1}]{-x_1}
\Vertex[a=180,d=2,L={2}]{x_2}
\Vertex[a=240,d=2,L={7}]{x_7}
\Vertex[a=300,d=2,L={1}]{x_1}

\Vertex[a = 36,d = 3,L={5}]{-x_5}
\Vertex[a = 108,d = 3,L={6}]{-x_6}
\Vertex[a = 180,d = 3,L={3}]{-x_3}
\Vertex[a = 252,d = 3,L={4}]{-x_4}
\Vertex[a = 324,d = 3,L={8}]{-x_8}

\Edges(x_3,x_4,x_8,x_5,x_6,x_3);
\Edges(-x_3,-x_4,-x_8,-x_5,-x_6,-x_3);
\Edges(x_5,x_2,-x_3,-x_1,x_8);
\Edges(-x_1,-x_6,-x_7,x_4);
\Edges(-x_5,-x_2,-x_7);
\Edges(-x_2,x_3,x_1,x_6,x_7);
\Edges(x_1,-x_8);
\Edges(-x_4,x_7,x_2);
\Edges(-x_5,-x_7);
\Edges(x_5,x_7);

\end{tikzpicture}
\hfill
\begin{tikzpicture}

\node at (144:3.5) {$H_1$};

\SetVertexMath
\GraphInit[vstyle=Normal]

\tikzset{VertexStyle/.style = {
inner sep = 0pt,
outer sep = 0pt,
minimum size = 1.25em,
shape = circle,
draw}}

\Vertex[a = 0,d = 1,L={3}]{x_3}
\Vertex[a = 72,d = 1,L={4}]{x_4}
\Vertex[a = 144,d = 1,L={8}]{x_8}
\Vertex[a = 216,d = 1,L={5}]{x_5}
\Vertex[a = 288,d = 1,L={6}]{x_6}

\Vertex[a=0,d=2,L={2}]{-x_2}
\Vertex[a=60,d=2,L={7}]{-x_7}
\Vertex[a=120,d=2,L={1}]{-x_1}
\Vertex[a=180,d=2,L={2}]{x_2}
\Vertex[a=240,d=2,L={7}]{x_7}
\Vertex[a=300,d=2,L={1}]{x_1}

\Vertex[a = 36,d = 3,L={5}]{-x_5}
\Vertex[a = 108,d = 3,L={6}]{-x_6}
\Vertex[a = 180,d = 3,L={3}]{-x_3}
\Vertex[a = 252,d = 3,L={4}]{-x_4}
\Vertex[a = 324,d = 3,L={8}]{-x_8}

\Edges(x_3,x_4,x_8,x_5,x_6,x_3);
\Edges(-x_3,-x_4,-x_8,-x_5,-x_6,-x_3);
\Edges(x_5,x_2,-x_3,-x_1,x_8);
\Edges(-x_1,-x_6,-x_7,x_4);
\Edges(-x_5,-x_2,-x_7);
\Edges(-x_2,x_3,x_1,x_6,x_7);
\Edges(x_1,-x_8);
\Edges(-x_4,x_7,x_2);

\end{tikzpicture}
\hfill \phantom{.}

\vspace{2em}

\hfill
\begin{tikzpicture}
\node at (144:3.5) {$H_2$};
\SetVertexMath
\GraphInit[vstyle=Normal]

\tikzset{VertexStyle/.style = {
inner sep = 0pt,
outer sep = 0pt,
minimum size = 1.25em,
shape = circle,
draw}}

\Vertex[a = 0,d = 1,L={3}]{x_3}
\Vertex[a = 72,d = 1,L={4}]{x_4}
\Vertex[a = 144,d = 1,L={8}]{x_8}
\Vertex[a = 216,d = 1,L={5}]{x_5}
\Vertex[a = 288,d = 1,L={6}]{x_6}

\Vertex[a=0,d=2,L={2}]{-x_2}
\Vertex[a=60,d=2,L={7}]{-x_7}
\Vertex[a=120,d=2,L={1}]{-x_1}
\Vertex[a=180,d=2,L={2}]{x_2}
\Vertex[a=240,d=2,L={7}]{x_7}
\Vertex[a=300,d=2,L={1}]{x_1}

\Vertex[a = 36,d = 3,L={5}]{-x_5}
\Vertex[a = 108,d = 3,L={6}]{-x_6}
\Vertex[a = 180,d = 3,L={3}]{-x_3}
\Vertex[a = 252,d = 3,L={4}]{-x_4}
\Vertex[a = 324,d = 3,L={8}]{-x_8}

\Edges(x_3,x_4,x_8,x_5,x_6);
\Edges(-x_3,-x_4,-x_8,-x_5,-x_6);
\Edges(x_5,x_2,-x_3,-x_1,x_8);
\Edges(-x_1,-x_6,-x_7,x_4);
\Edges(-x_5,-x_2,-x_7);
\Edges(-x_2,x_3,x_1,x_6,x_7);
\Edges(x_1,-x_8);
\Edges(-x_4,x_7,x_2);

\end{tikzpicture}
\hfill
\begin{tikzpicture}
\node at (144:3.5) {$H_3$};
\SetVertexMath
\GraphInit[vstyle=Normal]

\tikzset{VertexStyle/.style = {
inner sep = 0pt,
outer sep = 0pt,
minimum size = 1.25em,
shape = circle,
draw}}

\Vertex[a = 0,d = 1,L={3}]{x_3}
\Vertex[a = 72,d = 1,L={4}]{x_4}
\Vertex[a = 144,d = 1,L={8}]{x_8}
\Vertex[a = 216,d = 1,L={5}]{x_5}
\Vertex[a = 288,d = 1,L={6}]{x_6}

\Vertex[a=60,d=2,L={7}]{-x_7}
\Vertex[a=120,d=2,L={1}]{-x_1}
\Vertex[a=240,d=2,L={7}]{x_7}
\Vertex[a=300,d=2,L={1}]{x_1}

\Vertex[a = 36,d = 3,L={5}]{-x_5}
\Vertex[a = 108,d = 3,L={6}]{-x_6}
\Vertex[a = 180,d = 3,L={3}]{-x_3}
\Vertex[a = 252,d = 3,L={4}]{-x_4}
\Vertex[a = 324,d = 3,L={8}]{-x_8}

\Edges(x_3,x_4,x_8,x_5,x_6,x_3);
\Edges(-x_3,-x_4,-x_8,-x_5,-x_6,-x_3);
\Edges(-x_3,-x_1,x_8);
\Edges(-x_1,-x_6,-x_7,x_4);
\Edges(x_3,x_1,x_6,x_7);
\Edges(x_1,-x_8);
\Edges(-x_4,x_7);
\Edges(-x_5,-x_7);
\Edges(x_5,x_7);

\end{tikzpicture}
\hfill \phantom{.}

\caption{The four combinatorial embeddings of contact graph candidates remaining after \cref{geometricSection}.  Antipodal vertices share the same vertex label.  $H_1$ is obtained from $G$ by deleting two antipodal edges, $H_2$ is obtained from $H_1$ by deleting two antipodal edges, and $H_3$ is obtained from $G$ by deleting a pair of antipodal vertices.\label{fig.sloane graphs}}

\end{figure}

At this point, we have reduced our search to four combinatorial embeddings, which we denote by $G$, $H_1$, $H_2$, and $H_3$; see \cref{fig.sloane graphs} for an illustration.
One may verify that $G$ corresponds to the numerical packing obtained by Conway, Hardin and Sloane~\parencite{conway96}, which we made exact in~\parencite{mixon19}.
Each $H_\ell$ can be obtained from $G$ by deleting appropriate edges or vertices.
Since deleting an edge or isolating a vertex from a contact graph may correspond to an infinitesimal shift of its vertices, we should not expect the coarse approximations in the linear program described so far to be precise enough to rule out any of $H_1$, $H_2$, or $H_3$.
In the following section, we close this gap by passing to a dual optimization program.

\section{Optimality Conditions and Uniqueness}
\label{sec.opt and uniqueness}

To complete the proof of the main result, we first apply optimization theory to analyze each of the remaining four contact graph candidates (eliminating all but one), and then we leverage ideas from real algebraic geometry to demonstrate uniqueness.
For the first step, we follow~\parencite{musin15} by testing whether each candidate is consistent with the necessary Karush--Kuhn--Tucker optimality conditions~\parencite[\S 5.5.3]{boyd04}.
In words, these conditions imply that optimal packings are infinitesimally rigid and therefore must have an associated equilibrium stress matrix, which we denote by $(\omega_{ij})$.

\begin{lem}\label{stressLemma}
	Let $X = \{x_1, \ldots, x_{2n}\}$ be an optimal antipodal $2n$-packing.  Set
	\[
	J(i) := \{j : x_i \leftrightarrow x_j \text{ in } G(X)\},
	\]
and for each $j \in J(i)$, let $v_{ij}$ denote the unit vector at $x_i$ tangent to the geodesic from $x_i$ to $x_j$. There exist coefficients $\omega_{ij} = \omega_{ji} \in \mathbf{R}$ for $1 \leq i, j \leq {2n}$ with $i\neq j$ such that
\begin{enumerate}
\item[(i)] $\omega_{ij} \geq 0$ for all $i,j$ with $i\neq j$,
\item[(ii)] $\omega_{ij} = 0$ whenever $x_i$ and $x_j$ are not adjacent in $G(X)$,
\item[(iii)] $\sum_{i,j:i\neq j} \omega_{ij} = 1$, and
\item[(iv)] for each $i$, $\sum_{j \in J(i)} \omega_{ij} v_{ij} = 0$.
\end{enumerate}
\end{lem}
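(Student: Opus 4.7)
The plan is to realize the stress coefficients $\omega_{ij}$ as Lagrange multipliers for the Karush--Kuhn--Tucker (KKT) first-order optimality conditions of the distance-maximization problem, then extract each of the four conditions from a standard KKT analysis.

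First I would recast the problem as a smooth constrained optimization on a finite-dimensional manifold. Parameterizing the antipodal $2n$-packing by representatives $x_1, \dots, x_n \in S^2$ with $x_{i+n} := -x_i$ and introducing an auxiliary variable $t$, one maximizes $t$ subject to $d(x_i, x_j) \geq t$ for every pair $i \neq j$ in $\{1, \dots, 2n\}$, where each distance involving $x_{i+n}$ is treated as a smooth function of $x_i$. At the optimum $X$ with $t = \psi$, Fritz John's theorem furnishes nonnegative multipliers $\omega_0 \geq 0$ and $\omega_{ij} \geq 0$ attached to the active constraints, not all zero. Stationarity in $t$ forces $\omega_0 = \sum_{i<j} \omega_{ij}$, so $\omega_0 = 0$ would entail $\omega_{ij} = 0$ for all $i,j$, contradicting nontriviality. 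Thus $\omega_0 > 0$, we may rescale so that $\omega_0 = 1$, and the full KKT conclusion applies; extending the multipliers symmetrically by $\omega_{ji} := \omega_{ij}$ and further normalizing so that $\sum_{i \neq j} \omega_{ij} = 1$, conditions (i), (ii), and (iii) follow immediately from dual feasibility, complementary slackness, and the normalization.

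For (iv), I would invoke the stationarity condition with respect to each representative $x_k$. A direct differentiation of $d(x_i, x_j) = \arccos\langle x_i, x_j\rangle$ shows that the tangent-space gradient of $d(\cdot, x_j)$ at $x_i \in S^2$ is $-v_{ij}$. Since a variation $\delta \in T_{x_k} S^2$ of the representative $x_k$ induces the opposite variation $-\delta$ of the antipode $x_{k+n}$, the stationarity condition for $x_k$ aggregates contributions from all edges incident to $x_k$ or $x_{k+n}$. Using the elementary identities $v_{(k+n)(j+n)} = -v_{kj}$ and $v_{k(j+n)} = -v_{(k+n)j}$ in $\mathbf{R}^3$, combined with the antipodal symmetry $\omega_{ij} = \omega_{(i+n)(j+n)}$ of the multipliers, these contributions collapse into a single equation $\sum_{j \in J(k)} \omega_{kj} v_{kj} = 0$, which is (iv).

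The main obstacle will be ensuring the antipodal symmetrization of the multipliers is valid, i.e.\ that after imposing $\omega_{ij} = \omega_{(i+n)(j+n)}$ the KKT conditions continue to hold. This is justified by averaging: at an antipodal packing the active distance constraints themselves form a union of orbits under the $\mathbf{Z}/2$-action swapping $i$ with $i+n$, so replacing any KKT-compatible multiplier vector by its average over this action produces a KKT-compatible multiplier vector that is moreover $\mathbf{Z}/2$-symmetric. With this symmetrization in hand, the remaining verifications are routine and parallel the treatment of the non-antipodal Tammes problem carried out in \parencite{musin15}.
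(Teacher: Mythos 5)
Your proposal is correct and follows essentially the same route as the paper: both realize $(\omega_{ij})$ as first-order (KKT) multipliers of the max-$\delta$ packing program, obtaining (i) from dual feasibility, (ii) from complementary slackness, (iii) from stationarity in the auxiliary variable, and (iv) from stationarity in the point positions projected onto the tangent plane at each $x_i$. The differences are only in bookkeeping: the paper works with squared chordal distances and keeps all $2n$ points as variables with explicit linear antipodality constraints (so no symmetrization over the $\mathbf{Z}/2$-action is needed for (iv)), whereas you eliminate the antipodes and use geodesic distances; your Fritz John argument for the positivity of the objective multiplier is, if anything, slightly more careful than the paper's direct appeal to KKT.
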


\begin{proof}
For any antipodal $2n$-packing $Z=\{z_1,\ldots,z_{2n}\}$, we may take $z_{j + n} = -z_j$ for $1 \leq j \leq n$ without loss of generality by reordering if necessary.
As such, the antipodal $2n$-packing problem in $S^2$ is equivalent to the following program, whose decision variables are $z_1,\ldots,z_{2n}\in\mathbf{R}^3$ and $\delta\in\mathbf{R}$:
\[
\begin{array}{rl}
\text{maximize} & \delta\\
\text{subject to} & \delta - |z_i - z_j|^2 \leq 0 \text{ for all } i,j \text{ with } i\neq j,\\
& |z_i|^2 - 1 = 0\text{ for all } i, \text{ and}\\
& z_{i + n} + z_i = 0 \text{ for all } 1 \leq i \leq n.
\end{array}
\]
For the optimal packing $X$, we denote the square of its minimum Euclidean distance by $\Delta:=2 - 2\cos(\psi(X))$.
Then $\Delta$ is the maximum value of the above program with maximizer $(x_1,\ldots,x_{2n},\Delta)$.
For what follows, it is convenient to put $z:=(z_1,\ldots,z_{2n})$ and define
	\begin{align*}
	f(z,\delta) &= \delta,\\
	g_{ij}(z,\delta) &= \delta - |z_i - z_j|^2  \text{ for } i,j \text{ with } i\neq j,\\
	h_i(z,\delta) &= |z_i|^2 - 1 \text{ for } 1 \leq i \leq 2n, \text{ and}\\
	h'_i(z,\delta) &= z_{i + n} + z_i \text{ for } 1 \leq i \leq n.
	\end{align*}
	The Karush--Kuhn--Tucker conditions guarantee coefficients $\omega_{ij}$, $\lambda_i$, and $\lambda_i'$ with
	\begin{equation}\label{lagrange}
	\nabla f (x,\Delta) = \sum_{i=1}^{2n}\sum_{\substack{j=1\\j\neq i}}^{2n} \omega_{ij} \nabla g_{ij}(x,\Delta) + \sum_{i = 1}^{2n} \lambda_i \nabla h_i(x,\Delta) + \sum_{i = 1}^n \lambda_i' \nabla h_i'(x,\Delta).
	\end{equation}
By symmetry, we may replace both $\omega_{ij}$ and $\omega_{ji}$ with their average, thereby ensuring $\omega_{ij}=\omega_{ji}$.
Dual feasibility ensures $\omega_{ij} \geq 0$ for all $i,j$ with $i\neq j$, implying (i).   Complementary slackness ensures $\omega_{ij} g_{ij}(x,\Delta) = 0$. As such, when $j \not \in J(i)$ and $j\neq i$, then $g_{ij}(x,\Delta) \neq 0$, and so $\omega_{ij} = 0$.  This gives (ii). Notice that (iii) follows by considering the $\delta$-component of the gradient vector in~\eqref{lagrange}, while its $z_i$-component yields 
	\[
	0 = \sum_{j \in J(i)} 2\omega_{ij}(x_j - x_i) + 2(\lambda_i + \lambda_i') x_i.
	\]
To obtain (iv), project the above identity onto the $2$-dimensional subspace that is parallel to the tangent plane to $S^2$ at $x_i$, and then observe that for $j\in J(i)$, the vectors $x_j-x_i$ have the same norm, and therefore the same component in the $x_i$ direction since $x_i,x_j\in S^2$.
\end{proof}

\subsection{Eliminating Subgraphs}  \cref{stressLemma} reports that every optimal antipodal packing $X$ must have a corresponding stress matrix $(\omega_{ij})$.
Note that we can verify whether a given $(\omega_{ij})$ is a stress matrix once we have the graph $G(X)$ and the tension directions $(v_{ij})$, both of which are determined by $X$.
In this subsection, we argue that for each combinatorial embedding $H_\ell$ illustrated in \cref{fig.sloane graphs}, then for every choice of tension directions $(v_{ij})$ that are consistent with the bounds derived in the previous section, there is no stress matrix $(\omega_{ij})$ that is consistent with both $H_\ell$ and $(v_{ij})$.

Given $H_\ell$, suppose for the sake of contradiction that there exists an optimal antipodal $16$-packing $X$ such that the essential part of $G(X)$ belongs to the homeomorphism equivalence class of $H_\ell$.
We start by putting \cref{stressLemma}(iv) in a standard form.
For each $i$, we apply an isometry $Q_i$ that maps the tangent plane of $S^2$ at $x_i$ to $\mathbf{R}^2$ in such a way that one of the tension vectors $v_{ij}$ is mapped to $(1,0)$.
Put $u_{ij}:=Q_i(v_{ij})$.
Then \cref{stressLemma}(iv) is equivalent to $\sum_{j\in J(i)}\omega_{ij}u_{ij}=0$.
Importantly, each $u_{ij}$ takes the form $u_{ij}=(\cos(\theta_{ij}),\sin(\theta_{ij}))$, where each $\theta_{ij}$ is a sum of interior angles $\theta_{ij}=\sum_{k\in K_{ij}}\theta_k$ over an index set $K_{ij}$ that is determined by the combinatorial embedding of $H_\ell$.
For each $k$, the previous section computed bounds on $m_k$ and $M_k$ such that $\theta_k\in[m_k,M_k]$, and as we will see, we can apply the relationships in \cref{stressLemma} to further improve these bounds.

%

Notice that bounds on the interior angles $\{\theta_k\}$ imply bounds on the angles $\theta_{ij}=\sum_{k\in K_{ij}}\theta_k$, which in turn imply bounds on the coordinates of $u_{ij}$ that take the form $\cos(\theta_{ij}) \in [c_{ij},C_{ij}]$ and $\sin(\theta_{ij}) \in [s_{ij},S_{ij}]$.  If $(\omega_{ij})$ satisfies \cref{stressLemma}(iv), then it must be the case that
\begin{align}
\sum_{j \in J(i)} \omega_{ij} c_{ij} &\leq 0 \leq \sum_{j \in J(i)} \omega_{ij} C_{ij},\text{ and} \nonumber\\
\sum_{j \in J(i)} \omega_{ij} s_{ij} &\leq 0 \leq \sum_{j \in J(i)} \omega_{ij} S_{ij}.\label{sineCosine}
\end{align}
By linear programming, we can quickly determine whether the set $\Omega$ consisting of all $(\omega_{ij})$ that satisfy both \cref{stressLemma}(i)--(iii) and \eqref{sineCosine} is empty.
Overall, we can sharpen the bounds from \cref{eliminateSection} by imposing the additional constraint that $\Omega$ is nonempty.
With this approach, we repeatedly partition the admissible intervals for each variable from \cref{eliminateSection}, improving the bounds on each $m_k$ and $M_k$ until we demonstrate $m_k>M_k$ for some $k$, thereby delivering the desired contradiction.

\subsection{The Optimal Configuration}

By the process of elimination, every maximal connected set of optimal antipodal $16$-packings in $S^2$ has a representative packing $X$ whose contact graph belongs to the homeomorphism equivalence class of either $G$ or its reflection.
Furthermore, no other irreducible graph is the contact graph of an optimal antipodal $16$-packing.
Since $G$ is irreducible with no isolated vertices, we may conclude that the contact graph of every optimal antipodal $16$-packing is isomorphic to $G$.

By selecting one vector from each antipodal pair in $X$, we obtain an optimal projective $8$-packing $\Phi = \{\varphi_1, \ldots, \varphi_8\}$.  If we abuse notation by treating $\Phi$ as a matrix with columns $\varphi_1, \ldots, \varphi_8$, then its \emph{Gram matrix} is given by $\Phi^T\Phi$.  Notice that $(\Phi^T\Phi)_{ij} = \mu_8$ precisely when $\varphi_i$ and $\varphi_j$ are adjacent in $G(X)$, while $(\Phi^T\Phi)_{ij} = -\mu_8$ precisely when $\varphi_i$ and $-\varphi_j$ are adjacent in $G(X)$.  Every other entry of $\Phi^T\Phi$ is strictly less than $\mu_8$ in absolute value.
We choose $\varphi_1, \ldots, \varphi_8\in X$ so that the resulting Gram matrix takes the form
\[
\Phi^T\Phi = \left(\begin{array}{rrrrrrrr}
1 & a_1 & \cellcolor{lightgray}{\mu_8} & a_2 & a_3 & \cellcolor{lightgray}{-\mu_8} & a_4 & \cellcolor{lightgray}{-\mu_8} \\
a_1 & 1 & \cellcolor{lightgray}{-\mu_8} & a_5 & \cellcolor{lightgray}{-\mu_8} & a_6 & \cellcolor{lightgray}{-\mu_8} & a_7 \\
\cellcolor{lightgray}{\mu_8} & \cellcolor{lightgray}{-\mu_8} & 1 & \cellcolor{lightgray}{\phantom{-}\mu_8} & a_8 & \cellcolor{lightgray}{-\mu_8} & a_9 & a_{10} \\
a_2 & a_5 & \cellcolor{lightgray}{\mu_8} & 1 & a_{11} & a_{12} & \cellcolor{lightgray}{\mu_8} & \cellcolor{lightgray}{\mu_8} \\
a_3 & \cellcolor{lightgray}{-\mu_8} & a_8 & a_{11} & 1 & \cellcolor{lightgray}{\mu_8} & \cellcolor{lightgray}{\mu_8} & \cellcolor{lightgray}{-\mu_8} \\
\cellcolor{lightgray}{-\mu_8} & a_6 & \cellcolor{lightgray}{-\mu_8} & a_{12} & \cellcolor{lightgray}{\mu_8} & 1 & \cellcolor{lightgray}{\mu_8} & a_{13} \\
a_4 & \cellcolor{lightgray}{-\mu_8} & a_9 & \cellcolor{lightgray}{\mu_8} & \cellcolor{lightgray}{\mu_8} & \cellcolor{lightgray}{\mu_8} & 1 & a_{14} \\
\cellcolor{lightgray}{-\mu_8} & a_7 & a_{10} & \cellcolor{lightgray}{\mu_8} & \cellcolor{lightgray}{-\mu_8} & a_{13} & a_{14} & 1
\end{array}\right).
\]
Overall, $\Phi^T\Phi$ is rank-$3$ and positive semidefinite, $a_j$ satisfies $|a_j| < \mu_8$ for every $1 \leq j \leq 14$, and $\mu_8$ satisfies \cref{mu8Lemma}.
In~\parencite{mixon19}, we used cylindrical algebraic decomposition to establish that there is a unique real matrix of this form.  Moreover, we computed $\mu_8$ exactly as the third-largest root of
\[
	1 + 5x - 8x^2 - 80x^3 - 78x^4 + 146x^5 - 80x^6 - 584x^7 + 677x^8 + 1537x^9,
\]
which is given numerically by $\mu_8 \approx 0.6475889787$.
This completes the proof of the main result.

\section{Future Work}
\label{sec.future work}

In this paper, we identified an optimal projective $8$-packing in $\mathbf{RP}^2$, and we established that it is unique up to isometry.
The unique optimality of this packing was predicted in~\parencite{conway96}, where numerical solutions for putatively optimal projective $n$-packings in $\mathbf{RP}^2$ were presented for $n \leq 55$.
For the $n=9$ case, they conjecture that there are two distinct isometry classes of optimal projective packings.
It would be natural to extend our work to study these larger packings, and such extensions would benefit from a more computationally efficient procedure.
The primary bottleneck appears in \cref{contactSection}, where, given a projective contact graph candidate in $\Gamma_0$ on $m$ edges, we naively produced all of its $2^m$ double covers, and then we retained a single isomorphic copy of each double cover that happened to be planar and consistent with other combinatorial constraints.
As indicated by \cref{projLemma}, we only need to consider members of $\Gamma_0$ that are projective planar, but we did not leverage this information in our computation; as such, we suspect that a fast projective planarity algorithm~\parencite[\S 15.8]{kocay17} could speed up our procedure.
Also, does there exist a faster-than-naive algorithm for generating planar double covers of a given projective planar graph?
Finally, it would be interesting to investigate comparable approaches for efficiently packing points in $\mathbf{FP}^{d-1}$ for $\mathbf{F} \in \{\mathbf{R},\mathbf{C}\}$ with $d \geq 3$ and, more generally, packing higher-dimensional subspaces in $\mathbf{F}^d$.

\section*{Acknowledgments}

DGM was partially supported by AFOSR FA9550-18-1-0107, NSF DMS 1829955, and the Simons Institute of the Theory of Computing.

\printbibliography

\end{document}